\documentclass[a4paper,10pt]{amsart}
\usepackage{amssymb}

\newcommand{\Ps}{\mathbf{P}}

\newcommand{\C}{\mathbf{C}}
\newcommand{\Q}{\mathbf{Q}}
\newcommand{\Z}{\mathbf{Z}}
\newcommand{\F}{\mathbf{F}}

\newcommand{\G}{\mathbf{G}}

\newtheorem{lemma}{Lemma}[section]
\newtheorem{proposition}[lemma]{Proposition}
\newtheorem{theorem}[lemma]{Theorem}

\theoremstyle{definition}
\newtheorem{notation}[lemma]{Notation}
\newtheorem{definition}[lemma]{Definition}
\newtheorem{construction}[lemma]{Construction}

\newtheorem{conv}[lemma]{Convention}
\theoremstyle{remark}
\newtheorem{remark}[lemma]{Remark}

\DeclareMathOperator{\rank}{rank}

\DeclareMathOperator{\Div}{Div}

\DeclareMathOperator{\Gr}{Gr}
\DeclareMathOperator{\Hom}{Hom}
\DeclareMathOperator{\NS}{NS}
\DeclareMathOperator{\cha}{char}
\DeclareMathOperator{\MW}{MW}

\DeclareMathOperator{\sing}{sing}
\title{Supersingular  elliptic surfaces and Infinitesimal Torelli}
\author{Remke Kloosterman}
\begin{document}
\begin{abstract}
In 1981 Katsura presented a classification of non-rational Jacobian elliptic surfaces which admit a base change which is rational. In 2004 we presented a classification of Jacobian regular elliptic surfaces which do not satisfy infinitesimal Torelli. These classifications of quite different properties turn out to be  very similar. In this paper we use an argument exploiting the product-quotient structure of these examples to prove simultaneously that Katsura's examples are Artin supersingular, and to give a new proof that our examples do not satisfy infinitesimal Torelli.
\end{abstract}
\maketitle
\section{Introduction}
Recently, Church \cite{Church} presented a number of counterexamples to Shioda's conjecture on the unirationality of supersingular surfaces with trivial \'etale fundamental group. Church's counterexamples are of product-quotient type, i.e., the quotient of a product of two curves by a finite group, and these counterexamples are of general type.

Church's method relies on the existence of diagonal symmetric differential forms. It seems unclear whether this method can be extended to construct counterexample of Kodaira dimension 1. 
In a search for interesting product-quotient elliptic surfaces we noticed the paper \cite{KatUni} by Katsura, in which he classified all Jacobian elliptic surfaces  which are not rational but have a base change which is a  rational surface (we will refer to this as BCU). 
One easily shows that the $j$-invariant of such a surface is constant if $p\geq 3$.

The list of Weierstrass equations in \cite{KatUni}  is  very similar to the list in \cite[Theorem 4.8]{Ext} of Weierstrass equations for complex elliptic surfaces over $\Ps^1$ with a section and $p_g>0$ for which the infinitesimal Torelli property fails, the main difference between the two lists being the inclusion of $K3$ examples in Katsura's list and the fact that the authors works over different fields. We list the examples from \cite{KatUni} and \cite{Ext}:
\[
\begin{array}{|c|c|c|c|}
\hline
y^2=x^3+f(s,t)& n&\mbox{Failure of } & \mbox{BCU if}    \\
\mbox{with } f&& \mbox{infinitesimal Torelli} & p\equiv 5\bmod 6\\ \hline
t^5(t-s)^5s^2 &2&-& X\\
t^5(t-s)^4s^3 &2&-& X\\
t^4(t-s)^4s^4 &2&-& X\\
t^5(t-s)^5s^5(t-\alpha s)^3&3 &X& X\\
t^5(t-s)^5s^4(t-\alpha s)^4&3&X& X\\
t^5(t-s)^5s^5(t-\alpha s)^5(t-\beta s)^4&4&X& X\\
t^5(t-s)^5s^5(t-\alpha s)^5(t-\beta s)^5(t-\gamma s)^5&5&X&X \\
\hline
\end{array}\]
\[
\begin{array}{|c|c|c|c|}
\hline
y^2=x^3+g(s,t)x&n& \mbox{Failure of } & \mbox{BCU if}    \\
\mbox{with } g&& \mbox{infinitesimal Torelli} & p\equiv 3\bmod 4\\ \hline
t^3(t-s)^3s^2 &2&-&X\\
t^3(t-s)^3s^3(t-\alpha s)^3&3 &X&X\\
\hline
\end{array}\]
The Weierstrass equations $y^2=x^3+f$ and $y^2=x^3+gx$ define a hypersurface in $\Ps(1,1,2n,3n)$. The actual surface for which BCU or the failure of infinitesimal Torelli holds is the minimal resolution of singularities of this hypersurface.
For the failure of infinitesimal Torelli to occur, one picks $\alpha,\beta,\gamma \in \C\setminus\{0,1\}$ pairwise distinct, for BCU one picks $\alpha,\beta,\gamma\in K\setminus \{0,1\}$ pairwise distinct in an algebraically closed field $K$  of characteristic $p$. 

A natural question is whether the similarity of the two classifications  is a pure coincidence or not. We do not believe that there is a common proof for the fact that these lists are exhaustive, since the proof of infinitesimal Torelli for elliptic surfaces with nonconstant $j$-invariant requires half a page of Koszul cohomology computations \cite{Kii, InfTorRev, LPW,Sai}, whereas the exclusion of BCU for these sufaces is quite straightforward.

 However, in this paper we present an argument implying easily that  every example in Katsura's list is Artin supersingular, which is a weaker property than BCU, and that every example from our list \cite[Theorem 4.8]{Ext} admits a decomposition $H^2(S,\Q)\cong\NS(S)\otimes \Q\oplus V$, where $V$ is a $\Q$-Hodge structure of weight 2 with trivial $(1,1)$-part. This decomposition in Hodge  structures is actually stronger than the failure of infinitesimal Torelli, see Theorem~\ref{thmInf}.
 
 The strategy relies uses the fact that every surface $X$ in the above list has constant $j$-invariant 0 or 1728. 
 This implies that $X$ is of product-quotient type: there is a finite cyclic group $G$ and a curve $C$ together with a morphism $C\to \Ps^1$ which is Galois with Galois group $G$, an elliptic curve $E$ with a $G$-action, such that $(C\times E)/G\to \Ps^1$ is birational to $X\to \Ps^1$.

If $j$ were constant and different from $0$ and $1728$ then $G$  would have two elements. However, for $j(E)=0$ or $1728$  it might happen that $G$ has $G\cong \Z/m\Z$ with $m\in \{3,4,6\}$. This allows us to define a second $G$-action on $C\times E$, where $g\in G$ acts on the first factor as before, but by $g^{-1}$ on the second factor. In this way we obtain a second elliptic surface $X'$. 
If we drop the condition on the congruence class of the characteristic then
the Weierstrass equations listed above parametrize all elliptic surfaces $X$ over an algebraically closed field not of characteristic 2 or 3  such that $X$ is not rational and the partner surface  $X'$ is rational.

One easily describe both  $H^2(X)$ and $H^2(X')$ in terms of  two subspaces, one generated by divisors and one generated by tensors product of subspaces of $H^1(C)$ and $H^1(E)$, albeit in two different ways. From these descriptions one deduces that if $X'$ is rational and 
  $K=\C$  then $\NS(X)^{\perp}$ in  $H^2(X)$ has trivial $(1,1)$-part and if $\cha(K)\equiv 5\bmod 6$ and $j=0$ or $\cha(K)\equiv 3 \bmod 4$ and $j=1728$ then all slopes of Frobenius on the crystalline cohomology are equal to one, i.e., $X$ is Artin supersingular.

In Section~\ref{secPQ} we describe the product-quotient structure of an elliptic surface with constant $j$-invariant $0$ and describe its cohomology in terms of the cohomology of the curves used to construct the quotient. In Section~\ref{secPF} we use this description to conclude that all surfaces with $j$-invariant 0 in Katsura's list are Artin supsersingular and all examples in our list from \cite[Theorem 4.8]{Ext} do not satisfy infinitesimal Torelli. In Section~\ref{secRMK} we discuss a difference between the Mordell-Weil groups in characteristic zero and positive characteristic and show that a certain Prym-type construction does not satisfy Torelli-type properties. In Section~\ref{secj1728} we discuss the case of $j$-invariant 1728.

\section{Cohomology of elliptic surfaces with constant $j$-invariant}\label{secPQ}
In this section we study the product-quotient structure of an elliptic surface with constant $j$-invariant. Most of the results mentioned in this section are  well-known to the experts, but we include them for the reader's convenience.

\begin{notation} Fix the following
\begin{itemize}
\item a field $K$, either the complex numbers or an algebraically closed field of characteristic $p$, with $p>3$;
\item  a positive  integer $n$;  
\item a fixed primitive $6$-th root of unity $\zeta\in K$;
\item a homogeneous polynomial $f\in K[s,t]_{6n}$ of degree $6n$, such that every irreducible factor of  $f$ has multiplicity at most 5;
\item the number of distinct zeroes $k$ of $f$ in $K$;
\item  a polynomial $h\in K[s,t]_k$ such that $( h ) =\sqrt{( f\rangle)}$; 
\item the polynomial $g=h^6/f\in K[s,t]_{6(k-n)}$;
\end{itemize}

Moreover, for any integer $m$ and  homogeneous form $F \in K[s,t]_{6m}$ such that every irreducible factor of  $F$ has multiplicity at most 5, we fix
\begin{itemize}
\item the surface $X_F=V(-y^2+x^3+F) \subset \Ps(1,1,2m,3m)$ (with coordinates $s,t,x,y$);
\item  the  elliptic surface  $\varphi_F :\tilde{X}_F\to \Ps^1$, associated with $X_F$ (cf. Remark~\ref{rmkWei});
\item the curve $C_F=V(-u^6+F) \subset \Ps(1,1,m)$  (with coordinates $s,t,u$) and it normalization $\tilde{C}_F$;
\item the curve $E=V(-y^2z+x^3+z^3)\subset \Ps^2$;
\item  the point $O=(0:1:0)\in E$;
\item the automorphisms $\tau_F:C_F\to C_F$  defined by $(s:t:u)\mapsto (s:t:\zeta u)$;
 \item the automorphisms $\tilde{\tau_F}:\tilde{C}_F\to\tilde{C}_F$, the lift of $\tau_f$;
\item  the automorphism $\sigma:E\to E$ given by $(x:y:z)\mapsto (\zeta^2x:\zeta^3y:z)$;
\item the automorphism $\rho_F=(\tilde{\tau_F},\sigma):\tilde{C}_F\times E\to \tilde{C}_F\times E$;
\item the rational map $\pi':C_F\times E\dashrightarrow X_F$ sending $((s:t:u),(x:y:z))$ to $(zs:zt:u^2z^{2m-1}x:u^3z^{3m-1}y)$;
\item the rational map  $\pi:\tilde{C}_F\times E \dashrightarrow X_F$, the composition of the morphism $\tilde{C}_F\times E\to C_F\times E$ with $\pi'$.
\end{itemize}
\end{notation}
Note that $h$ is only determined up to a constant and so is  $g$. However, $X_g$ and $C_g$ are uniquely determined up to isomorphism.

\begin{conv}\label{ConvGen}
Let $Y/K$ be a projective variety, possibly singular. 
If $K=\C$ then $H^i(Y)$ stands for the singular cohomology group $H^i(Y,\Q)$ together with its natural mixed Hodge structure (MHS) and we set $L=\Q$. If $K$ is an algebraically closed field of positive characteristic then $H^i(Y)$ stands for rigid cohomology, considered as a Frobenius module, and we set $L$ to be the quotient field of the ring of Witt vectors from $K$.
The choice for rigid cohomology is motivated by the fact that we work often with varieties with isolated singularities.

Both structures come with a weight filtration. For the MHS case see for example \cite{PSbook}, for the rigid cohomology case see \cite{Nak}.
Unless stated otherwise, any map between cohomology groups is understood to be a morphism of MHS if $K=\C$ and a morphisms of Frobenius modules if $\cha(k)=p>0$.  It is well-known  that any such  morphism is a strict morphism of filtered vector spaces.
In almost all cases it follows from the geometric origin of the maps between cohomology groups that they respect the MHS, respectively, the Frobenius module-structure. The single exception in the sequel where this is not the case wlll be mentioned explicitly.

Since $Y$ is projective we have that  the highest weight of the weight filtration on $H^i$ equals $i$.
\end{conv}

\begin{remark}\label{rmkWei} Consider the weighted projective space $\Ps(1,1,2n,3n)$ with coordinates $s,t,x,y$.
Let $\Pi=V(s,t)\subset \Ps(1,1,2n,3n)$. If $n>1$ then $\Pi=\Ps(1,1,2n,3n)_{\sing}$.

Let $P\in K[s,t,x,y]$ be a weighted homogeneous polynomial of degree $6n$.
Let $Y=V(P)\subset \Ps(1,1,2n,3n)$. If at least one of the coefficients of $y^2$ and $x^3$ in $P$ is nonzero then $Y\cap \Pi$ consists of a single point $p$.
 
Suppose now that $Y\cap \Pi$ consists of a single point $p$. Consider the projection $\varphi:Y\setminus \{p\}\to \Ps^1$ from $p$ on $V(x,y)$, the $s,t$-line. The general fiber of $\varphi$ is a Weierstrass equation of an affine piece of   an irreducible arithmetic genus one curve.
 By taking the fiberwise closure, i.e., by blowing up the point $p$ on $Y$,  we obtain a family $Y'$ of plane cubics  in a $\Ps^2$-bundle. 
Suppose now that the generic fiber is a smooth cubic. It is then easy to see that $Y'$ has at most finitely many singular points.
A minimal resolution of singularities of $Y'$ yields a smooth projective surface $\tilde{Y}$ together with a morphism $\tilde{\varphi}:\tilde{Y}\to \Ps^1$, which is flat, with generic fiber of  genus one. Moreover, the strict transform on $\tilde{Y}$ of the exceptional divisor   of the first blow-up $Y'\to Y$  is the image of a section of $\tilde{\varphi}$.

It is easy to check that $X_f$ and $X_g$ satisfy the  above imposed genericity constraints.
\end{remark}

The following lemma is immediate:
\begin{lemma}\label{lembir} The rational map $\psi: C_f\dashrightarrow C_g$ defined by $(s:t:u)\mapsto  (s:t:h/u)$ is birational and such that $\psi\circ \tau_f=\tau_g^{-1}\circ \psi$. In particular, $\tilde{C}_f\cong \tilde{C}_g$.
\end{lemma}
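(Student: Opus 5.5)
We check directly that $\psi$ is well defined, that it has an inverse of the same form, and that it intertwines the two automorphisms.

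\emph{Degrees.} Recall $\deg f=6n$, $\deg h=k$ and $g=h^6/f$ of degree $6(k-n)$. Thus $C_f\subset\Ps(1,1,n)$ and $C_g\subset\Ps(1,1,k-n)$.

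\emph{Well-definedness.} The coordinate $u$ on $C_f$ has weight $n$, so $h/u$ is weighted homogeneous of weight $k-n$. Hence $(s:t:h/u)$ is a well-defined rational map $\Ps(1,1,n)\dashrightarrow\Ps(1,1,k-n)$ on the open set $u\neq 0$. It sends $C_f$ into $C_g$: on $C_f$ we have $u^6=f$, so
\[
(h/u)^6=h^6/f=g .
\]
The open set $u\neq0$ is dense in $C_f$, since its complement is the finite set $V(f)\cap C_f$. So $\psi$ is a rational map on every component of $C_f$.

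\emph{Inverse.} By symmetry, define $\psi'\colon C_g\dashrightarrow C_f$ by $(s:t:v)\mapsto(s:t:h/v)$. It is defined where $v\neq0$, and $(h/v)^6=h^6/g=f$, so it lands in $C_f$. Composing,
\[
\psi'\circ\psi(s:t:u)=(s:t:h/(h/u))=(s:t:u)
\]
on a dense open set, and likewise $\psi\circ\psi'$ is the identity. Hence $\psi$ is birational.

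\emph{Intertwining.} Compute
\[
\psi(\tau_f(s:t:u))=(s:t:h/(\zeta u))=(s:t:\zeta^{-1}h/u)=\tau_g^{-1}(\psi(s:t:u)).
\]

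\emph{Normalizations.} A birational map between reduced curves induces an isomorphism of their normalizations; on each irreducible component it lifts uniquely to an isomorphism of smooth projective models. Therefore $\tilde C_f\cong\tilde C_g$, and the identity $\psi\circ\tau_f=\tau_g^{-1}\circ\psi$ lifts to the normalizations.

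\emph{Reducibility.} The only point needing care is that $C_f$ may be reducible, for example when $f$ is a perfect power. This causes no difficulty: the argument above is componentwise, and $\psi$ maps components bijectively because $\psi'$ is a two-sided rational inverse.
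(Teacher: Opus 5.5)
Your proposal is correct. The paper gives no written proof, only calling the lemma immediate, and your direct check (the inverse $(s:t:v)\mapsto(s:t:h/v)$ from $(h/u)^6=h^6/f=g$, together with $h/(\zeta u)=\zeta^{-1}h/u$) is exactly that omitted verification. You also go slightly beyond the paper by treating the case where $C_f$ is reducible, e.g.\ $f=t^4(t-s)^4s^4$, componentwise.
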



\begin{lemma}\label{lemquot} The rational map $\pi':C_f\times E\dashrightarrow X_f$ is regular on $\tilde{C}_f\times (E\setminus \{O\})$.

If $(s_0:t_0)$ is such that $f(s_0:t_0)=0$ then $\pi'(\{(s_0:t_0:0)\}\times E\setminus \{O\})=\{(0:0:s_0:t_0)\}$, i.e., $\pi'$ contracts the fiber of $(s_0:t_0)$.

Let $(x_0:y_0:s_0:t_0)\in X_f$ be a point such that $f(s_0,t_0)\neq 0$ then $(\pi')^{-1}(x_0:y_0:s_0:t_0)$ is finite and consists of at most six points.

The map $\pi'$ is generically finite of degree $6$ and Galois, with Galois group  generated by $\rho_f$.
\end{lemma}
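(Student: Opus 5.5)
The plan is to work directly with the explicit formula
\[
\pi'((s:t:u),(x:y:z))=(zs:zt:u^2z^{2n-1}x:u^3z^{3n-1}y),
\]
writing $m=n$ and $F=f$, and to use weighted homogeneity in $\Ps(1,1,2n,3n)$. First we check that the image lies on $X_f$. Substitute into $-y^2+x^3+f$. The degree-$6n$ form $f(zs,zt)$ equals $z^{6n}f(s,t)=z^{6n}u^6$ on $C_f$. The other two terms give $u^6z^{6n-3}(-y^2z+x^3)$. Adding, we get $u^6z^{6n-3}(-y^2z+x^3+z^3)$, which vanishes on $E$.

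For regularity away from $O$ we work on the affine chart $z=1$ of $E$. There the map reads $(s:t:u)\mapsto(s:t:u^2x:u^3y)$, and this is well defined because $(s,t)\neq(0,0)$ on $C_f$. Indeed, $C_f$ misses $(0:0:1)$: that point satisfies $u^6=f(0,0)=0$, a contradiction. Composing with the morphism $\tilde C_f\to C_f$ gives regularity on $\tilde C_f\times(E\setminus\{O\})$.

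If $f(s_0,t_0)=0$, then $u=0$, and the image is $(s_0:t_0:0:0)$, which is independent of $(x,y)$. (The statement's ordering of coordinates should be read accordingly.) So that fiber is contracted.

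For the fibers, fix $(s_0:t_0:x_0:y_0)\in X_f$ with $f(s_0,t_0)\neq 0$. A preimage needs $(s:t)=(s_0:t_0)$, and we normalise $s=s_0,t=t_0$. The weighted scaling $(s,t,u)\mapsto(\lambda s,\lambda t,\lambda^{n}u)$ must be accounted for, and normalising fixes it. Then $u$ is one of the six roots of $u^6=f(s_0,t_0)\neq0$. Since $u\neq0$, the coordinates $(x,y)$ are determined by $x=x_0/u^2$ and $y=y_0/u^3$. Points with $z=0$ are excluded because they map outside this locus (or the map is undefined there). This gives at most six preimages. The preimages over $(s_0,t_0)$ are exactly $\{(s_0:t_0:\zeta^iu),(\zeta^{-2i}x,\zeta^{-3i}y)\}$, so they form the orbit under the group generated by $\rho_f^{-1}$, that is, under $\langle\rho_f\rangle$.

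To check invariance, note that $\rho_f$ sends $u\mapsto\zeta u$, $x\mapsto\zeta^2x$ and $y\mapsto\zeta^3y$. This multiplies $u^2x$ by $\zeta^4$ and $u^3y$ by $\zeta^6=1$. We need the image to be unchanged, so this first check is the main obstacle. With this $\sigma$ the invariance holds only up to the weighted scaling $\lambda$, with $\lambda^{2n}=\zeta^{4}$ and $\lambda^{3n}=1$ and $\lambda$ acting trivially on $(s,t)$. That is impossible unless the convention intended is $\sigma^{-1}$, or the orbit is taken of $\rho_f$ composed appropriately.

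The cleanest fix is to note that the fiber computation shows the preimages form an orbit of the order-$6$ cyclic group $\{(\tau_f^i,\sigma^{-i})\}$. This group coincides with $\langle\rho_f\rangle$ as an abstract subgroup only after replacing $\sigma$ by $\sigma^{-1}$. So we will carefully recheck the sign of $\zeta$-exponents, verifying that $\pi'\circ\rho_f=\pi'$ with the paper's conventions, and adjust by Lemma~\ref{lembir}-type inversion if needed.

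Granting invariance, the argument finishes as follows. The group of order $6$ acts freely on the open set where $u\neq0$ and $z\neq0$. Its orbits there are exactly the fibers, so $\pi'$ has degree $6$. The field extension $K(C_f\times E)/K(X_f)$ therefore has degree $6$ and admits $6$ automorphisms over the base, hence it is Galois with group $\langle\rho_f\rangle$. Dominance follows since the image is $2$-dimensional: generic fibers are finite and $X_f$ is irreducible.
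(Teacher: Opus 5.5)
For the first three assertions your argument is the paper's. $C_f$ has no point with $s=t=0$, so $\pi'$ is regular wherever $z\neq0$. Setting $u=0$ gives the contraction. Normalising $(s,t)=(s_0,t_0)$ forces $u^6=f(s_0,t_0)$, $x=x_0/u^2$ and $y=y_0/u^3$. Your check that the image lies on $X_f$ is a useful addition.

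On the last assertion your suspicion is right, and your own computation already settles it, so there is nothing to ``grant'' or ``recheck''. Since $(s:t)$ is fixed, the weighted scalar must be $\lambda=1$. So the factor $\zeta^4$ picked up by $u^2x$ cannot be absorbed, and $\pi'\circ\rho_f\neq\pi'$ on the dense open set $\{z\neq0,\ ux\neq0\}$. With the stated $\tau_f$ and $\sigma$ the final clause is therefore false as written: the deck group is $\langle(\tilde\tau_f,\sigma^{-1})\rangle$.

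The paper's proof makes exactly this slip. Its computation $u_1=\eta u_0$, $x_0=\eta^2x_1$, $y_0=\eta^3y_1$ exhibits the fibre as an orbit of $(\tilde\tau_f,\sigma^{-1})$, yet it then asserts that $\langle\rho_f\rangle$ acts transitively. In fact, since $\psi\circ\tau_f=\tau_g^{-1}\circ\psi$ by Lemma~\ref{lembir}, $\langle\rho_f\rangle$ is the deck group of the partner map $\tilde C_f\times E\dashrightarrow X_g$.

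So state the corrected clause and prove it. Invariance under $(\tilde\tau_f,\sigma^{-1})$ is the one-line check $u^2x\mapsto\zeta^2\zeta^{-2}u^2x$, $u^3y\mapsto\zeta^3\zeta^{-3}u^3y$. After that, your freeness/degree/Galois paragraph is complete.

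Also remove the earlier sentence identifying the fibre with a $\langle\rho_f^{-1}\rangle$-orbit. $\rho_f^{-1}$ sends $(u,x,y)$ to $(\zeta^{-1}u,\zeta^{-2}x,\zeta^{-3}y)$, whose orbit is not your list $\{(\zeta^iu,\zeta^{-2i}x,\zeta^{-3i}y)\}$. That sentence contradicts your later, correct, paragraph.

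The correction is harmless downstream. In the next lemma the invariant part attached to $X_f$ becomes $H^1(\tilde C_f)^{\zeta}\otimes H^1(E)^{\zeta}\oplus H^1(\tilde C_f)^{\zeta^5}\otimes H^1(E)^{\zeta^5}$, i.e.\ the formulas for $f$ and $g$ are interchanged. The later Hodge-type and slope arguments still reach the same conclusions.
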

\begin{proof}
If $\pi'$ is not defined at a point $p=((s:t:u),(x:y:z))$ then $zs=zt=0$. Since $C_f$ has no points with $s=t=0$, we obtain that if $z\neq 0$ then $\pi'$ is defined at $p$.  The only point on $E$ with $z=0$ is $O=(0:1:0)$. Hence $\pi'$ is not defined on $C_f\times \{O\}$, and defined everywhere else.

If a point $p_0:=((s_0:t_0:u_0),(x_0:y_0:1))$ belongs to $ C_f\times (E\setminus \{O\})$ then $\pi'(x_0)=(s_0:t_0:u_0^2x_0:u_0^3y)$. Substituting $u_0=0$ yields the second statement.

Suppose now that $f(s_0,t_0)\neq 0$ and $p_1:=((s_1:t_1:u_1),(x_1:y_1:1))$ is a point such that $\pi'(p_0)=\pi'(p_1)$.  Comparing the first two coordinates yields $(s_1,t_1)\neq (0,0)$, since $(s_0,t_0)\neq (0,0)$. After scaling, if necessary, we may assume  $s_0=s_1$ and $t_0=t_1$. Then $u_1^6=f(s_1,t_1)=f(s_0,t_0)=u_0^6$. Hence there exists an $\eta$ such that $\eta^6=1$ and $u_1=u_0\eta\neq 0$. Comparing the final two coordinates of $\pi'(p_0)$ and $\pi'(p_1)$ yields
\[ x_0u_0^2=x_1\eta^2u_0^2 \mbox{ and }  y_0u_0^3=y_1\eta^3u_0^3.\]
From $u_0^6=f(s_0,t_0)\neq 0$ we obtain $x_0=x_1\eta^2,y_0=y_1\eta^3$. Hence there are at most six points in the fiber over $p_0$ and if $x_0y_0\neq0$ then there are precisely six points in a fiber. Moreover $\rho_f$ fixes $\pi^{-1}(p_0)$ and $\langle \rho_f\rangle$ acts both faithfully and transitively on $\pi^{-1}(p_0)$ whenever $f(s_0,t_0)\neq 0$.
\end{proof}

\begin{remark}  \label{remquo}
Let $G=\langle \tilde{\rho_f}\rangle$.
From the previous lemma it follows that $(\tilde{C}_f\times E)/G$ is birational to $\tilde{X}_f$. Hence $H^2(\tilde{C}_f\times E)^G$ and $H^2(\tilde{X}_f)$ should contain nontrivial isomorphic substructures. In order to make this more precise we will consider cohomology up to substructures with whose Hodge polygon (if $K=\C$) or Newton polygon ($\cha(K)>0$) has  only slope 1, since such substructures do not admit variation of Hodge structures and are automatically supersingular.


By the K\"unneth formula we obtain that $H^2(\tilde{C}_f\times E)^G$ is a direct sum of  $(H^1(\tilde{C}_f)\otimes H^1(E))^G$ and $H^2(E)\otimes H^0(\tilde{C}_f)\oplus H^0(E)\otimes H^2(\tilde{C}_f)$, where the latter factor is generated by classes of divisors. Therefore we will merely focus on $(H^1(\tilde{C}_f)\otimes H^1( E))^G$ instead of $H^2(\tilde{C}_f\times E)^G$.
\end{remark}
\begin{notation}
For a root of unity $\eta$ we denote  with $H^i(\tilde{C}_f)^\eta$ the $\eta$-eigenspace of $\tilde{\tau_f}^*$ and with $H^i(E)^\eta$ the $\eta$ eigenspace of $\sigma^*$.
\end{notation}
\begin{lemma} We have
 \[ \left(H^1(\tilde{C}_f)\otimes H^1(E)\right)^{\langle \rho_f\rangle}\cong \left(H^1(\tilde{C}_f)^\zeta\otimes H^1(E)^{\zeta^5}\right)\oplus \left(H^1(\tilde{C}_f)^{\zeta^5} \otimes H^1(E)^\zeta\right)\]
 and
\begin{eqnarray*} \left(H^1(\tilde{C}_g)\otimes H^1(E)\right)^{\langle \rho_g\rangle}&\cong& \left(H^1(\tilde{C}_g)^\zeta\otimes H^1(E)^{\zeta^5}\right)\oplus \left(H^1(\tilde{C}_g)^{\zeta^5} \otimes H^1(E)^\zeta\right)
\\ &\cong& \left(H^1(\tilde{C}_f)^{\zeta^5}\otimes H^1(E)^{\zeta^5}\right)\oplus \left(H^1(\tilde{C}_f)^{\zeta} \otimes H^1(E)^{\zeta}\right).\end{eqnarray*}
 \end{lemma}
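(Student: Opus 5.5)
The plan is to decompose both tensor factors into simultaneous eigenspaces and keep the invariant pieces. Throughout, the coefficient field is extended to contain $\zeta$: $L(\zeta)$, or $K$-coefficients. The eigenspace decomposition only makes sense after such an extension, so the isomorphisms are understood in that sense, compatible with the relevant structure.

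First, since $\rho_f=(\tilde{\tau_f},\sigma)$ acts diagonally, the K\"unneth decomposition is equivariant: $\rho_f^*$ acts on $H^1(\tilde{C}_f)\otimes H^1(E)$ as $\tilde{\tau_f}^*\otimes\sigma^*$. Both operators have order dividing $6$, so they are diagonalizable over $L(\zeta)$. This gives
\[ H^1(\tilde{C}_f)\otimes H^1(E)=\bigoplus_{\eta,\eta'} H^1(\tilde{C}_f)^{\eta}\otimes H^1(E)^{\eta'}, \]
and $\rho_f^*$ acts on the $(\eta,\eta')$ summand by $\eta\eta'$. The invariant part is therefore the sum over the pairs with $\eta\eta'=1$.

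Next, $H^1(E)$ is computed directly. It is $2$-dimensional, and $\sigma$ is an automorphism of order $6$ fixing $O$, acting on the invariant differential $dx/y$ by $\zeta^2/\zeta^3=\zeta^{-1}=\zeta^5$. Since $\sigma$ has order exactly $6$, it acts on $H^0(\Omega^1)$ by a primitive $6$th root of unity. On the complementary piece, $H^1(\mathcal{O})$ or the conjugate part in the complex case, it acts by the conjugate $\zeta$. This is consistent with $\sigma^*$ being defined over $\Q$ with characteristic polynomial $x^2-x+1$. Hence $H^1(E)=H^1(E)^{\zeta}\oplus H^1(E)^{\zeta^5}$, each summand $1$-dimensional, and in positive characteristic the same holds because the characteristic polynomial is independent of the cohomology theory. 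Only $\eta'=\zeta,\zeta^5$ occur, so the invariant pairs are $(\zeta^5,\zeta)$ and $(\zeta,\zeta^5)$. This gives the first isomorphism and the first line of the second, applied with $g$ in place of $f$.

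For the last isomorphism, Lemma~\ref{lembir} gives an isomorphism $\tilde\psi:\tilde{C}_f\to\tilde{C}_g$, the lift of $\psi$, with $\tilde\psi\circ\tilde{\tau_f}=\tilde{\tau_g}^{-1}\circ\tilde\psi$. The relation lifts because normalization is functorial for birational maps. Pulling back, $\tilde\psi^*$ is an isomorphism $H^1(\tilde{C}_g)\to H^1(\tilde{C}_f)$ compatible with the structure, and it satisfies $\tilde{\tau_f}^*\tilde\psi^*=\tilde\psi^*(\tilde{\tau_g}^{-1})^*$. It therefore carries the $\eta$-eigenspace of $\tilde{\tau_g}^*$ onto the $\eta^{-1}$-eigenspace of $\tilde{\tau_f}^*$. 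In particular $H^1(\tilde{C}_g)^{\zeta}\cong H^1(\tilde{C}_f)^{\zeta^5}$ and $H^1(\tilde{C}_g)^{\zeta^5}\cong H^1(\tilde{C}_f)^{\zeta}$. Tensoring with the identity on $H^1(E)$ gives the last line.

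The one step needing care is the bookkeeping of which eigenvalue of $\sigma^*$ sits where, together with the claim that $\sigma^*$ has no eigenvalues other than $\zeta^{\pm1}$. Equally delicate is justifying that the eigenspace splitting is compatible with the MHS or Frobenius structure after the coefficient extension, since Frobenius is semilinear in characteristic $p$. I would handle this by noting that the automorphisms are defined over the base and commute with Frobenius, so the eigenspaces are sub-$F$-isocrystals over $L(\zeta)$, or are permuted compatibly.
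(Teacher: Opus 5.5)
Your proposal is correct and follows the same route as the paper. The invariant part is cut out by the pairs of eigenvalues with $\eta\eta'=1$, using that $\sigma^*$ has only the eigenvalues $\zeta,\zeta^5$ on $H^1(E)$; the last isomorphism comes from the isomorphism $\tilde{C}_f\cong\tilde{C}_g$ of Lemma~\ref{lembir}, which intertwines $\tilde{\tau_f}$ with $\tilde{\tau_g}^{-1}$ and so exchanges the $\zeta$- and $\zeta^5$-eigenspaces. Your remarks on extending coefficients and on Frobenius semilinearity (for $p\equiv 5\bmod 6$ Frobenius swaps the two summands, so only their sum is a Frobenius submodule) supply details the paper's two-line proof leaves implicit.
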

 
 \begin{proof}  The first and second isomorphism are a consequence of the fact that  the only eigenvalues of $\sigma^*$ on $H^1(E)$ are $\zeta,\zeta^5$.
  The third isomorphism follows from the isomorphism $\tilde{C}_f\to \tilde{C}_g$ from Lemma~\ref{lembir}.
 \end{proof}

Let $Y$ be a smooth projective surface and $C\subset Y$ be a curve, then there is a cycle class  $c(C)\in H^2(Y)$. This induces a cycle class map $c:\Div(Y)\to H^2(Y)$.  By abuse of notation, we denote with $\NS(Y)_L$ the image of $c(\Div(Y))\otimes_{\Z} L$, where $L$ is the field from Convention~\ref{ConvGen}.

\begin{definition} Let $Y_1,Y_2$ be two projective surfaces. We say that 
two sub-$\Q$-mixed Hodge structures (if $K=\C$), respectively, sub-Frobenius modules $V_1\subset H^2(Y_1)$ and $V_2\subset H^2(Y_2)$  \emph{differ by a pure slope one-structure} if there exists a smooth projective surface $Z$ and birational morphisms $\psi_i:Z\to Y_i$, and a substructure $V\subset H^2(Z)$ such that $\psi_i^*\Gr_2^W H^2(Y_i)+V=H^2(Z)$ for $i=1,2$ and if $K=\C$ then $V$ is pure $\Q$-Hodge structure of type $(1,1)$ and if $\cha(K)=p>0$ then $V$ is a Frobenius module such that its Newton polygon  of $V$ has only slope 1.
\end{definition}

\begin{remark} By Leftschetz-(1,1) one has that the $V$ of the above definition is generated by classes of divisors if $K=\C$. In positive characteristic, one has that every structure generated by classes of divisors is of pure weight 1, but the converse is only true  assuming the Tate conjecture.

Since $W_1H^2(Y_i)$ is in the kernel of $\psi_i^*$ one can equivalently take substructures of $\Gr_2^W H^2(Y_i)$ and consider their preimages under the projection map $H^2(Y_i)\to \Gr_2^W H^2(Y_i)$.
\end{remark}

\begin{lemma} Let $Y_1$ and $Y_2$ be projective surfaces. If $Y_1$ and $Y_2$ are birational then $\Gr_2^W H^2(Y_1)$ and $\Gr_2^W H^2(Y_2)$ differ only by a pure slope one-structure. 
\end{lemma}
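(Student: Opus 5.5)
Since $Y_1$ and $Y_2$ are birational, I take a common smooth model: resolve singularities (surfaces, so available in all characteristics), then resolve the indeterminacy of the birational map $Y_1\dashrightarrow Y_2$ by successive point blow-ups. This gives a smooth projective surface $Z$ with birational morphisms $\psi_i:Z\to Y_i$. It then suffices to find a substructure $V\subset H^2(Z)$ spanned by classes of curves contracted by $\psi_1$ or $\psi_2$ such that $\psi_i^*\Gr_2^W H^2(Y_i)+V=H^2(Z)$ for $i=1,2$. Such a $V$ is automatically of type $(1,1)$ when $K=\C$. For $\cha(K)=p$, Frobenius acts on a cycle class by $p$ times a permutation, after descending to a finite field if necessary, so the Newton polygon has only slope $1$.

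Fix $i$ and let $E_1,\dots,E_r$ be the curves in $Z$ contracted by $\psi_i$; this is a finite set because $\psi_i$ is birational and contracts finitely many curves. Let $U\subset Y_i$ be the complement of the finitely many points $\psi_i(E_j)$ together with $(Y_i)_{\sing}$, so that $\psi_i$ is an isomorphism over $U$. I would compare the long exact sequences of cohomology with compact supports for $U\subset Y_i$ and $\psi_i^{-1}(U)\subset Z$. The complement in $Y_i$ is a finite set of points, which only affects $H^0$ and $H^1$. Hence $H^2_c(U)\to H^2(Y_i)$ is surjective, with kernel coming from $H^1$ of points, which is zero. So $H^2(Y_i)$ is a quotient of $H^2_c(U)$.

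On $Z$ the sequence reads
\[H^1(D)\to H^2_c(Z\setminus D)\to H^2(Z)\to H^2(D),\]
where $D=\bigcup E_j$. The image of $H^2_c(Z\setminus D)=H^2_c(U)$ in $H^2(Z)$ factors through $\psi_i^*H^2(Y_i)$. The cokernel injects into $H^2(D)$, which is spanned by the fundamental classes of the components $E_j$ and is therefore pure of weight $2$ and slope $1$. By strictness, the span $V_i$ of the classes $c(E_j)$ in $H^2(Z)$ maps onto this cokernel: the intersection matrix of the $E_j$ is negative definite, so $c(E_j)$ restricts to the dual basis up to an invertible matrix. This gives $\psi_i^*H^2(Y_i)+V_i=H^2(Z)$. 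Since $H^2(Z)$ is pure of weight $2$, $\psi_i^*$ kills $W_1$, so $\psi_i^*\Gr_2^W H^2(Y_i)+V_i=H^2(Z)$. Finally I set $V=V_1+V_2$, which is still spanned by divisor classes.

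The main obstacle is the rigid-cohomology side. In characteristic $p$ one needs the excision sequence for compact supports with Frobenius compatibility, and the statement that $H^2$ of a proper curve is spanned by the fundamental classes of its components, with slope $1$. I would cite \cite{Nak} for the weight formalism and these localization sequences. The Hodge-theoretic case is standard via \cite{PSbook}.
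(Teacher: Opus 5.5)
Your approach is the paper's: pass to a common smooth model $Z$, compare the cohomology of $Y_i$ and $Z$ through a localization sequence, and use that $H^2$ of a proper curve is pure of slope one. For normal $Y_i$ your argument is complete. It is also more explicit than the paper's proof, which only shows that the cokernel of $\psi^*$ embeds in $\Gr_2^W H^2(D)$ and leaves the complement $V$ implicit; your negative-definiteness argument produces $V$ directly. (A minor point: descending to a finite field is not always possible, but you do not need it, since Frobenius multiplies cycle classes of divisors by $p$.)

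\textbf{The gap.} You assert that $Y_i\setminus U$ is finite, i.e.\ that $\psi_i$ is an isomorphism away from the images of the contracted curves. That is Zariski's main theorem, so it needs $Y_i$ normal. The lemma is stated for arbitrary projective surfaces, and these may be non-normal along a curve. In that case $(Y_i)_{\sing}$ is one-dimensional, and $\psi_i^{-1}(Y_i\setminus U)$ contains curves that are not contracted. So $Z\setminus D\not\cong U$, and your conclusion actually fails. For example, let $Y$ be obtained from $Z=\Ps^1\times\Ps^1$ by gluing $D_1=\{0\}\times\Ps^1$ to $D_2=\Ps^1\times\{0\}$ via $(0,y)\mapsto(y,0)$. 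This $Y$ is projective, since $\mathcal{O}(1,1)$ descends. The normalization $\psi:Z\to Y$ is finite, so your $V_i$ is $0$. Yet $\psi^*H^2(Y)=\{a: a\cdot D_1=a\cdot D_2\}$ is only the line spanned by $c_1(\mathcal{O}(1,1))$, so $\psi^*H^2(Y)+V_i\neq H^2(Z)$. The $H^2(\Sigma)$ term in the paper's exact sequence is exactly what accounts for this situation.

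\textbf{A repair.} Let $\Sigma_i\subset Y_i$ be the locus over which $\psi_i$ is not an isomorphism; it may be one-dimensional. Set $D_i=\psi_i^{-1}(\Sigma_i)$. Your comparison of the two localization sequences still works and gives
\[
H^2(Z)/\psi_i^*H^2(Y_i)\hookrightarrow H^2(D_i)/\mathrm{im}\,H^2(\Sigma_i),
\]
and the right-hand side is still pure of slope one. Take $V_i$ to be spanned by the classes of all components $D_{ij}$ of $D_i$. Negative definiteness is no longer available, so use the Hodge index theorem instead. Suppose $\delta=\sum_j c_jD_{ij}$ satisfies $\psi_{i*}\delta=0$ and $\delta\cdot D_{ij}=0$ for all $j$. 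Let $H$ be ample on $Y_i$. Then $\delta^2=0$, and by the projection formula $\delta\cdot\psi_i^*H=0$, while $(\psi_i^*H)^2>0$. Hence $\delta\equiv 0$, so $c(\delta)=0$. By duality, this says that $V_i$ and $H^2(Z)$ have the same image in $H^2(D_i)/\mathrm{im}\,H^2(\Sigma_i)$, which gives $\psi_i^*H^2(Y_i)+V_i=H^2(Z)$. Alternatively, take $V=\NS(Z)_L$, using nondegeneracy of the intersection form modulo numerical equivalence.
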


\begin{proof} 
We consider first the case where $Y_1$ and $Y_2$ are related by a birational morphism $\psi:Y_2\to Y_1$, with $Y_2$ smooth. Let $\Sigma\subset Y_1$ be the smallest closed subset such that $\psi:Y_2\setminus D\to Y_1\setminus \Sigma$ is an isomorphism, where $D=\psi^{-1}(\Sigma)$.
We have then an exact sequence
\[ H^1(D)\to H^2(Y_1)\to H^2(Y_2)\oplus H^2(\Sigma)\to H^2(D)\to H^3(Y_1)\]
Recall that $D$ is proper and of dimension at most one,  therefore $W_1H^1(D)=H^1(D)$ and taking $\Gr^2$ yields
\[ 0 \to \Gr_2^W H^2(Y_1)\to \Gr_2^W H^2(Y_2) \oplus \Gr_2^W H^2(\Sigma)\to \Gr_2^W H^2(D)\]
is exact. Moreover, since $Y_2$ is smooth we have that $\Gr_2^W H^2(Y_2)=H^2(Y_2)$. Let $D=D_1\cup\dots\cup D_m$  be a decomposition of $D$ into irreducible components. Since $D$ has dimension one we find $\Gr_2^W H^2(D)=\oplus_{i=1}^m \Gr_2^W H^2(D_i)$ and the latter is obviously of pure slope 1, which yields this case.

If both $Y_1$ and $Y_2$ are smooth then any birational map can be factored as a sequence of blow-up and blow-down morphism with center a point. From the above it now follows that $H^2(Y_1)$ and $H^2(Y_2)$ differ by a pure slope one-structure. 

If  either $Y_1$ or $Y_2$ is singular then there exist resolution of singularities $\tilde{Y_1}$ and $\tilde{Y_2}$ of $Y_1$ and $Y_2$ respectively. Then using the first paragraph it follows that $\Gr_2^W H^2(Y_i)$ and $\Gr_2^W H^2(\tilde{Y_i})$ differ by a pure slope one-structure where $i\in \{1,2\}$. The second part implies now that $\Gr_2^W H^2(Y_1)$ and $\Gr_2^W H^2(Y_2)$  differ by a pure slope one- structure.
\end{proof}

\begin{proposition} The groups $H^2(\tilde{X}_f)$ and $(H^1(\tilde{C}_f)\otimes H^1(E))^G$ differ by a pure slope one-structure.
\end{proposition}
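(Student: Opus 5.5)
We compare $H^2(\tilde X_f)$ with $H^2(\tilde C_f\times E)^G$ through the quotient $Y:=(\tilde C_f\times E)/G$. This is a normal projective surface with only finitely many quotient singularities, and it is birational to $\tilde X_f$ by Lemma~\ref{lemquot} and Remark~\ref{remquo}. The proof then has three steps.

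\textbf{Step 1: the quotient.} Since $G$ is finite and we work with $\Q$-coefficients, respectively coefficients in $L$ (a field of characteristic $0$), the quotient map $q:\tilde C_f\times E\to Y$ induces an isomorphism
\[ q^*:H^2(Y)\to H^2(\tilde C_f\times E)^G .\]
For singular cohomology this is standard. For rigid cohomology it follows by the same trace argument: the pushforward $q_*$ satisfies $q^*q_*=\sum_{g\in G} g^*$ and $q_*q^*=|G|$. The map $q^*$ respects the MHS, respectively the Frobenius structure, because it comes from a morphism of varieties. In particular $H^2(Y)$ is pure of weight $2$, so $\Gr_2^WH^2(Y)=H^2(Y)$. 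If one prefers to avoid the trace argument, one can instead use a $G$-equivariant resolution and the lemma above.

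\textbf{Step 2: passing to $\tilde X_f$.} Since $Y$ and $\tilde X_f$ are birational, the previous lemma shows that $\Gr_2^WH^2(Y)=H^2(Y)$ and $H^2(\tilde X_f)$ differ by a pure slope one-structure. Concretely, there is a smooth $Z$ with birational morphisms $\psi_1:Z\to Y$ and $\psi_2:Z\to\tilde X_f$ and a pure slope one $V\subset H^2(Z)$ such that
\[ \psi_1^*H^2(Y)+V=H^2(Z)=\psi_2^*H^2(\tilde X_f)+V .\]

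\textbf{Step 3: discarding the divisorial part.} By Künneth (Remark~\ref{remquo}),
\[ H^2(\tilde C_f\times E)^G=(H^1(\tilde C_f)\otimes H^1(E))^G\oplus W, \qquad W=H^2(E)\otimes H^0(\tilde C_f)\oplus H^0(E)\otimes H^2(\tilde C_f).\]
The summand $W$ is spanned by the classes of a fibre of each projection, so it is pure of type $(1,1)$, respectively of pure slope one. Its image in $H^2(Z)$ under $\psi_1^*(q^*)^{-1}$ can therefore be absorbed into $V$. Setting $V':=V+\psi_1^*(q^*)^{-1}W$, we get
\[ \psi_1^*(q^*)^{-1}\left(H^1(\tilde C_f)\otimes H^1(E)\right)^G+V'=H^2(Z)=\psi_2^*H^2(\tilde X_f)+V' .\]
This is the desired statement, where $(H^1(\tilde C_f)\otimes H^1(E))^G$ is viewed as a substructure of $H^2(Y)$ via $q^*$. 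Adding to $V$ a sum of pure $(1,1)$, respectively pure slope one, structures keeps it of that kind, since these properties are closed under sums of substructures.

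\textbf{Main obstacle.} The delicate point is Step 1 in positive characteristic: one must justify that rigid cohomology of the singular quotient $Y$ equals the $G$-invariants. I would first try the trace argument above, which requires the finite flat pushforward in rigid cohomology. If that is unavailable, I would use the fallback: replace $Y$ by the blow-up of $\tilde C_f\times E$ at the fixed points of $G$, on which $G$ still acts, and apply the lemma to the birational maps involved. Any extra classes this introduces come from exceptional curves and are therefore of pure slope one.
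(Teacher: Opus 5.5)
Your proof is correct and is essentially the paper's argument. It combines three ingredients: the birationality of $(\tilde C_f\times E)/G$ with $\tilde X_f$ together with the lemma on birational surfaces, the identification of $H^2$ of the quotient with $H^2(\tilde C_f\times E)^G$, and discarding the K\"unneth summand $H^2(E)\otimes H^0(\tilde C_f)\oplus H^0(E)\otimes H^2(\tilde C_f)$ spanned by fibre classes, as in Remark~\ref{remquo}. You merely take the steps in a different order and spell out how that summand is absorbed into $V$.

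One small correction: $q$ is not flat over the singular points of the quotient, so Step~1 cannot rest on a finite flat pushforward. It should instead rest on the standard fact that, with coefficients in a field of characteristic zero, the cohomology of a quotient by a finite group is the $G$-invariant part; the paper also uses this tacitly.
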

\begin{proof}
From Lemma~\ref{lemquot} it follows that $(\tilde{C}_f\times E)/G$ and $\tilde{X}_f$ are birational. Hence $H^2(\tilde{X}_f)=\Gr_2^W H^2(\tilde{X}_f)$ and $\Gr_2^W H^2((\tilde{C}_f\times E)/G)$ differ by a pure slope one-structure. As pointed out in Remark~\ref{remquo} the graded pieces $\Gr_2^WH^2((\tilde{C}_f\times E)/G)$ and $\Gr^W_2 (H^1(\tilde{C}_f)\otimes H^1(E))^G$ differ by a pure slope one-structure. Since both $\tilde{C}_f$ and $E$ are smooth we have   $\Gr_2^W (H^1(\tilde{C}_f)\otimes H^1(E))^G=(H^1(\tilde{C}_f)\otimes H^1(E))^G$.
\end{proof}

\section{Proofs}\label{secPF}
In this section we show that the examples mentioned in \cite[Theorem 4.6]{Ext} do not satisfy infinitesimal Torelli when $K=\C$, and the examples from \cite{KatUni} are Artin supersingular if $K=\F_q$.

\begin{lemma} We have that  $X_g$ is a rational surface if and only if $f$ is one of the following
\begin{enumerate}
\item $f$ has two zeroes in $K$ with order $5,1$; order $4,2$ or order $3,3$;
\item $f$ has three zeroes in $K$ with order $5,5,2$ or order $5,4,3$ or $4,4,4$;
\item $f$ has four zeroes in $K$ with order $5,5,5,3$ or $5,5,4,4$;
\item $f$ has five zeroes in $K$ with order $5,5,5,5,4$;
\item $f$ has six zeroes in $K$, each of order $5$.
\end{enumerate}
\end{lemma}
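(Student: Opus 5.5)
The plan is to reduce rationality of $\tilde X_g$ to the single numerical condition $\deg g=6$, and then enumerate the multiplicity patterns of $f$ for which this holds.

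\emph{Step 1: multiplicities of $g$.} Write the distinct zeroes of $f$ on $\Ps^1$ as $P_1,\dots,P_k$, with multiplicities $e_1,\dots,e_k$. By hypothesis $1\le e_i\le 5$ and $\sum e_i=6n$. Since $(h)=\sqrt{(f)}$, the form $h$ vanishes to order exactly one at each $P_i$ and nowhere else. Hence $g=h^6/f$ is a form of degree $6(k-n)$ with zeroes exactly at the $P_i$, of multiplicities $6-e_i\in\{1,\dots,5\}$. In particular $g$ satisfies the standing hypothesis on $F$, so $X_g$ and $\varphi_g:\tilde X_g\to\Ps^1$ are defined as in Remark~\ref{rmkWei}.

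\emph{Step 2: the rationality criterion.} Put $m=k-n$. Because every multiplicity of $g$ is at most $5$, the Weierstrass equation $y^2=x^3+g$ is minimal at every point of $\Ps^1$. For such a minimal Jacobian elliptic surface over $\Ps^1$ one has $\chi(\mathcal{O}_{\tilde X_g})=m$. By the canonical bundle formula, $K_{\tilde X_g}=\varphi_g^*\mathcal{O}_{\Ps^1}(m-2)$.
\begin{itemize}
\item The case $m=0$ cannot occur: it would force $g$ to be constant, i.e.\ all $e_i=6$.
\item If $m\ge 2$, then $p_g=h^0(\mathcal{O}_{\Ps^1}(m-2))=m-1>0$, so $\tilde X_g$ is not rational.
\item If $m=1$, then $q=0$ (an elliptic surface with a section and a singular fibre has $q$ equal to the genus of the base). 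Moreover $K=-F$ for a fibre $F$, so every plurigenus vanishes. Castelnuovo's criterion then shows that $\tilde X_g$ is rational; this works in characteristic $p$ as well.
\end{itemize}
Thus $\tilde X_g$ is rational if and only if $k=n+1$. Equivalently, $\sum_i(6-e_i)=6$ with each $6-e_i\in\{1,\dots,5\}$.

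\emph{Step 3: enumeration.} It remains to list the partitions of $6$ into $k$ parts, each between $1$ and $5$, and to replace each part $a$ by $e=6-a$.
\begin{itemize}
\item $k=1$ is impossible, since the single part would be $6$.
\item $k=2$: the partitions $5{+}1$, $4{+}2$, $3{+}3$ give $f$-orders $(1,5)$, $(2,4)$, $(3,3)$.
\item $k=3$: the partitions $4{+}1{+}1$, $3{+}2{+}1$, $2{+}2{+}2$ give $(2,5,5)$, $(3,4,5)$, $(4,4,4)$.
\item $k=4$: the partitions $3{+}1{+}1{+}1$, $2{+}2{+}1{+}1$ give $(3,5,5,5)$, $(4,4,5,5)$.
\item $k=5$: the partition $2{+}1{+}1{+}1{+}1$ gives $(4,5,5,5,5)$.
\item $k=6$: the partition into six $1$'s gives six zeroes of order $5$.
\item $k\ge 7$ is impossible, since each part is at least $1$.
\end{itemize}
This is exactly the list in the statement, and the argument gives both directions.

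The main obstacle is Step 2, namely justifying the rationality criterion uniformly, including in positive characteristic. The points to be careful about are minimality (which is where multiplicity $<6$ is used), the identification $\chi=m$, and the vanishing of $q$ so that Castelnuovo applies. I would quote the standard theory of elliptic surfaces (Shioda, Schütt--Shioda) for these facts rather than reprove them.
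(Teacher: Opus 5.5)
Your proposal is correct and follows the paper's route: reduce rationality to $\deg g=6$ (equivalently $k=n+1$), then enumerate the partitions of $6$ into parts between $1$ and $5$ and replace each part $a$ by $6-a$. The only difference is that you justify the rationality criterion (minimality because the orders of $g$ are at most $5$, $\chi(\mathcal{O})=m$, the canonical bundle formula, and Castelnuovo/Zariski), whereas the paper states this criterion without proof.
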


\begin{proof}
The surface $X_g$ is rational if and only if $\deg(g)=6$ and $g$ has at least two zeroes. This limits the possibile factorizations to two factors with exponents $(5,1)$,$(4,2)$ or $(3,3)$; three factors with exponents $(4,1,1)$, $(3,2,1)$ or $(2,2,2)$; four factors with exponents $(3,1,1,1)$ or  $(2,2,1,1)$; five factors with exponents $(2,1,1,1,1)$ or six linear factors. The possible factorization patterns for $f$ now follow immediately.
\end{proof}
\begin{remark}
If $f$ is chosen such $X_g$ is rational and $f$ has two distinct factors  then $X_f$ is rational; if $f$ has three distinct factors then $X_f$ is $K3$, if $f$ has at least four factors then the Kodaira dimension of $X_f$ is 1.
\end{remark}

\begin{proposition}
Suppose $K=\C$.
If $X_g$ is a rational surface then $\NS(\tilde{X}_f)^\perp\subset H^2(\tilde{X}_f)$ has trivial $(1,1)$-part.
\end{proposition}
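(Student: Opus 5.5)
We work with the transcendental part via the product-quotient description. By the Proposition of Section~\ref{secPQ}, $H^2(\tilde{X}_f)$ and $T_f:=(H^1(\tilde{C}_f)\otimes H^1(E))^G$ differ by a pure slope one-structure (for $K=\C$: a pure $(1,1)$-structure, generated by divisors by Lefschetz-(1,1)). The same holds for $\tilde{X}_g$ and $T_g:=(H^1(\tilde{C}_g)\otimes H^1(E))^{\langle\rho_g\rangle}$. Since every class in $H^2(\tilde X_f)$ of type $(1,1)$ is algebraic, $\NS(\tilde X_f)^\perp$ is the transcendental lattice and it suffices to show that the Hodge structure $T_f$ has no $(2,0)$- and $(0,2)$-free part of type $(1,1)$ outside its algebraic classes. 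Concretely, the plan is to show that $T_f$ has Hodge types only $(2,0)$ and $(0,2)$, i.e.\ $T_f^{1,1}=0$. Then any $(1,1)$ class of $H^2(\tilde X_f)$ comes from the pure $(1,1)$ complement, hence is algebraic, and $\NS(\tilde X_f)^\perp$, being a sub-Hodge structure whose $(1,1)$-classes would be algebraic and orthogonal to $\NS$ (so zero, since the intersection form is nondegenerate on $\NS_\Q$ by Hodge index), has trivial $(1,1)$-part.

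The key computation uses the eigenspace description from the Lemma: $T_f\otimes\C=(H^1(\tilde C_f)^\zeta\otimes H^1(E)^{\zeta^5})\oplus(H^1(\tilde C_f)^{\zeta^5}\otimes H^1(E)^\zeta)$ and $T_g\otimes\C\cong(H^1(\tilde C_f)^{\zeta^5}\otimes H^1(E)^{\zeta^5})\oplus(H^1(\tilde C_f)^{\zeta}\otimes H^1(E)^{\zeta})$. On $E$ the action of $\sigma$ on $H^{1,0}(E)$ is by one primitive sixth root, say (after possibly swapping $\zeta$ and $\zeta^5$) $H^1(E)^\zeta=H^{1,0}(E)$ and $H^1(E)^{\zeta^5}=H^{0,1}(E)$; one checks this directly on $dx/y$. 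Since $\tau$ commutes with the Hodge decomposition, $H^1(\tilde C_f)^{\eta}=H^{1,0,\eta}\oplus H^{0,1,\eta}$. Now use rationality of $X_g$: then $p_g(\tilde X_g)=0$ and $H^2(\tilde X_g)$ is pure $(1,1)$, so $T_g$ is of type $(1,1)$. The $(2,0)$-part of $T_g$ is $H^{1,0}(\tilde C_f)^{\zeta}\otimes H^{1,0}(E)$ and the $(0,2)$-part is $H^{0,1}(\tilde C_f)^{\zeta^5}\otimes H^{0,1}(E)$; vanishing forces $H^{1,0}(\tilde C_f)^{\zeta}=0$ and, by complex conjugation, $H^{0,1}(\tilde C_f)^{\zeta^5}=0$. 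Hence $H^1(\tilde C_f)^\zeta$ is purely of type $(0,1)$ and $H^1(\tilde C_f)^{\zeta^5}$ purely of type $(1,0)$.

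Plugging into $T_f$: $H^1(\tilde C_f)^\zeta\otimes H^1(E)^{\zeta^5}$ is of type $(0,1)\otimes(0,1)=(0,2)$ and $H^1(\tilde C_f)^{\zeta^5}\otimes H^1(E)^\zeta$ is of type $(1,0)\otimes(1,0)=(2,0)$. Thus $T_f^{1,1}=0$, completing the argument as outlined above.

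The main obstacle is bookkeeping of conventions: making sure the eigenvalue on $H^{1,0}(E)$ matches the labelling (if it is $\zeta^5$ instead, the argument is symmetric), and that the identification $T_g\cong$ the displayed sum via $\psi$ (which inverts $\tau$) preserves Hodge types—it does, as $\psi$ is algebraic. A secondary point is justifying that the comparison "differ by a pure slope one-structure" transfers the statement on $(1,1)$-parts; this follows since the extra summands are generated by divisor classes, which lie in $\NS$.
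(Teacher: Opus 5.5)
Your proposal is correct and follows essentially the same route as the paper. Rationality of $X_g$ forces $T_g$ to be pure $(1,1)$, which (via the inversion $\psi$) makes $H^1(\tilde C_f)^{\zeta}$ and $H^1(\tilde C_f)^{\zeta^5}$ pure of opposite types $(1,0)$ and $(0,1)$; hence $T_f$ has only types $(2,0)$ and $(0,2)$, and one passes back to $H^2(\tilde X_f)$ modulo divisor classes. The differences are cosmetic: checking on $dx/y$ gives $\sigma^*(dx/y)=\zeta^{-1}dx/y$, i.e.\ the paper's labelling $H^{1,0}(E)=H^1(E)^{\zeta^5}$ (your symmetry remark covers this), and your Hodge-index/nondegeneracy argument spells out the final step that the paper compresses into ``this equals $H^2(\tilde X_f)$ up to divisors''.
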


\begin{proof}
Recall that $\sigma^*$ acts as multiplication by $\zeta^5$ on $H^0(K_E)$. Therefore we obtain $H^0(K_E)=H^{1,0}(E)=H^1(E)^{\zeta^5}$ and $H^{0,1}(E)=H^1(E)^{\zeta}$. Since $X_g$ is rational  we have that $H^2(X_g)$ is of pure $(1,1)$-type. In particular,
\[ \left(H^1(\tilde{C}_g)^{\zeta} \otimes H^1(E)^{\zeta^5}\right) \oplus\left( H^1(\tilde{C}_g)^{\zeta^5}\otimes H^1(E)^\zeta\right)\]
is of pure $(1,1)$-type. Therefore $H^1(C_g)^\zeta$ is of pure $(0,1)$-type and $H^1(C_g)^{\zeta^5}$ is of pure $(1,0)$-type. Since $H^1(\tilde{C}_g)^\zeta\cong H^1(\tilde{C}_f)^{\zeta^5}$  by Lemma~\ref{lembir} we find that $(H^1(C_f)\otimes H^1(E))^G$ has trivial $(1,1)$-part. Since this equals $H^2(\tilde{X}_f)$ up to divisors, it follows that $\NS(\tilde{X}_f)^{\perp}$ has trivial  $(1,1)$-part.
\end{proof}

\begin{construction}
Let $\psi:\mathcal{X}\to\mathcal{B}$ be a flat family of smooth complex manifolds of dimension $d$, and assume that $\mathcal{B}$ is contractible. Let $0\leq m \leq k \leq 2d$ be integers.
Fix a point $0\in \mathcal{B}$. Let $b^{m,k}=\dim F^m H^k(X_0,\C)$. Let $b\in \mathcal{B}$ be a point and let $X_b$ be $\psi^{-1}(b)$. From Ehresmann's theorem it follows that $\psi$ is trivial as $C^\infty$ fibration. From this we obtain a natural isomorphism of abelian groups $\xi_b:H^k(X_b,\Z)\stackrel{\sim}{\longrightarrow} H^k(X_0,\Z)$. Tensoring with $\C$ yields a bijective linear map which by abuse of notation we denote by $\xi_b:H^k(X_b,\C)\to H^k(X_0,\C)$. However the latter map is not an isomorphism of MHS. The failure of being an isomorphism  can be used to define  the period map
\[ \mathcal{P}^{m,k}:\mathcal{B}\to \G(b^{m,k},H^k(X_0,\C))\]
sending $b$ to $\xi_b(F^m H^k(X_b,\C))$.
\end{construction}
\begin{remark}
 By Griffiths' transversality the differential of $\mathcal{P}^{m,k}$
\[ d\mathcal{P}^{m,k} : T_{\mathcal{B},b} \to \Hom(F^mH^k(X_b),H^k(X_b)/F^m H^k(X_b))\]
takes values in $\Hom(F^mH^k(X_b),F^{m-1}H^k(X_b)/F^m H^k(X_b))$.
\end{remark}
\begin{definition}
Suppose now that $\psi$ is as before, and, moreover, it  is a family of surfaces. Let  $\Lambda \subset H^2(X_0,\C)$ be a subgroup  generated by classes of divisors. Then we say that $\psi$ is $\Lambda$-polarized if for every $b\in \C$ the subset $\xi_b^{-1}(\Lambda) \subset H^2(X_b,\C)$ is generated by divisors. \end{definition}
\begin{remark}
If $\psi$ is $\Lambda$-polarized then the differential  $d\mathcal{P}^{2,2}$ takes values in
\[\Hom(H^{2,0}(X_b),(\Lambda ^{\perp}\otimes \C \cap F^{1}H^2(X_b))/ H^{2,0}(X_b)).\]
In particular, if $\Lambda\otimes \C$ equals $H^{1,1}(X,\C)$ then  $\Lambda^{\perp}\otimes \C = H^{2,0}\oplus H^{0,2}$ and this differential is zero.

The differential of the period map $\partial\mathcal{P}^{p,k}(u)$ equals the cup product with $\rho(u)$, where $\rho:T_{\mathcal{B},0} \to H^1(X_0,\Theta_{X_0})$ is the Kodaira-Spencer map. We say that $X_b$ satisfies infinitesimal Torelli if the map $H^1(X_b,\Theta_{X_b})\to \Hom (H^q(\Omega^p_{X_b}), H^{q+1}(\Omega^{p-1}_{X_b}))$ is injective. 
\end{remark}

\begin{theorem}\label{thmInf}
Suppose $K=\C$. If $X_g$ is rational and $k>3$, then $p_g(\tilde{X}_f)>1$ and $\tilde{X}_f$ does not satisfy infinitesimal Torelli.
\end{theorem}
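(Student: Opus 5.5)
Two things must be shown: that $p_g(\tilde{X}_f)>1$, and that infinitesimal Torelli fails.

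\emph{Step 1: computing $p_g$.} For a Weierstrass fibration $y^2=x^3+f$ with $\deg f=6n$ and all multiplicities at most $5$, the surface $\tilde{X}_f$ is minimal, and $p_g(\tilde{X}_f)=n-1$. Adding the multiplicities in the preceding lemma, the cases with $k\geq 4$ zeroes have $6n=18,18,24,30$. Hence $n\geq 3$ and $p_g\geq 2$.

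\emph{Step 2: reduction to a vanishing differential.} Let $\Lambda=\NS(\tilde{X}_f)$ and consider an arbitrary small deformation of $\tilde{X}_f$, on a contractible base $\mathcal{B}$. By the preceding proposition, $\Lambda^\perp\otimes\C=H^{2,0}\oplus H^{0,2}$. If the family is $\Lambda$-polarized, the remark after the definition of $\Lambda$-polarized families shows that $d\mathcal{P}^{2,2}$ vanishes identically. Since $F^1H^2=H^{2,0}\oplus H^{1,1}$, Griffiths transversality together with the symmetry of the Hodge structure then forces the whole differential of the period map on $H^2$ to vanish along such families.

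\emph{Step 3: a nontrivial $\Lambda$-polarized deformation.} We need a direction in $H^1(\Theta)$ that is nonzero but annihilates $H^2$ under cup product. The natural family is the one obtained by moving $\alpha,\beta,\gamma$. With $k>3$ there is at least one free parameter, namely $k-3\geq 1$ of them. These surfaces are pairwise non-isomorphic for general parameters, because the $k$ points of $\Ps^1$ up to $\mathrm{PGL}_2$ are recovered from the singular fibres. The family therefore gives an injective Kodaira--Spencer direction.

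In this family the zero section, the fibre class and the components of the singular fibres move flatly, so they stay divisors. The Mordell--Weil group of this isotrivial family is also locally constant, since it comes from $\Hom$ of Jacobians of $\tilde{C}_f$ and $E$ compatible with the $G$-action. Alternatively, note that the preceding proposition holds for every member of the family, so for each $b$ the group $\NS$ has the same rank $h^{1,1}$. Hence Picard number equals $h^{1,1}$ throughout, and $\xi_b^{-1}(\Lambda)$ is algebraic by Lefschetz (1,1). Either way the family is $\Lambda$-polarized. The period map is then constant to first order, so the cup-product map $H^1(\Theta)\to\bigoplus\Hom(H^q(\Omega^p),H^{q+1}(\Omega^{p-1}))$ kills the nonzero Kodaira--Spencer class, and infinitesimal Torelli fails.

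\emph{Main obstacle.} The delicate point is checking that the Kodaira--Spencer class of this family is nonzero, rather than just that the family is non-isotrivial. I would argue via the moduli of the branch points: the classifying map to the moduli of $k$ points on $\Ps^1$ has nonzero differential. This uses the fact that the $k$ points can be read off from the singular fibres of $\tilde{X}_f$, which is automatic since $\tilde{X}_f$ has a unique elliptic fibration when $\kappa=1$.
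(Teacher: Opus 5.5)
Your proposal is correct and follows essentially the paper's proof. You get $p_g=n-1=k-2$, which the paper reads off from the discriminant degree $12(k-1)$. The $(k-3)$-dimensional family obtained by moving the roots is $\Lambda$-polarized, and the preceding proposition kills the $(1,1)$-part of $\Lambda^\perp$. Hence the period differential vanishes along a direction whose Kodaira--Spencer class is nonzero.

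Your justifications of the nonzero Kodaira--Spencer class (via the canonical fibration and the branch points) and of the $\Lambda$-polarization fill in points the paper simply asserts. One small step in your option (b) needs an extra remark. Going from ``$\rho(X_b)=h^{1,1}$ for all $b$'' to ``$\xi_b^{-1}(\Lambda)$ is of type $(1,1)$'' uses that $b\mapsto \xi_b(H^{1,1}(X_b))$ is continuous with values in the countable set of $\Q$-rational subspaces, hence constant.
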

\begin{proof}
The family of all $\tilde{X}_f$ with the same factorization type up to automorphisms has dimension $k-3$. Moreover, the degree of the minimal discriminant of $\tilde{X}_f$ equals $12(k-1)$. Hence $p_g(X)=k-2>1$.

Let $\Lambda$ be the generic N\'eron-Severi group. Then the $\tilde{X}_f$ form a $\Lambda$-polarized family of smooth
surfaces. The previous proposition shows that $h^{1,1}(\Lambda^{\perp})=0$. This implies that the differential of the period map, when restricted to the tangent space of this family, takes values in $\Hom (H^{(2,0)},0)$, hence this differential is zero.

Since the image of the tangent space to this family in  $H^1(X,\Theta_X)$ under the  Kodaira-Spencer map is nonzero it follows that the infinitesimal Torelli property does not hold.\end{proof}

\begin{remark}
In \cite{Ext} we proved the above theorem by identifiying $h^{1,1}$ linearly independent  divisors classes, instead of exploiting the product-quotient structure. 
\end{remark}

We switch now to the positive characteristic side:

\begin{definition}
If $\cha(K)=p>0$ then we say that a smooth projective variety $Y/K$ is supsersingular if  for each $j\in \{0,1,\dots,2\dim(Y)\}$ all slopes of the Newton polygon of the crystalline cohomology $H^j(Y,W(K))$ are equal to $j/2$.
\end{definition}

For a variety $Y$ over a finite field with $q$ elements, this is equivalent to saying that the eigenvalues of Frobenius on the $\ell$-adic cohomology $H^j(X,\Q_\ell)$ are equal to the product of $q^{j/2}$ with  roots of unity.

We defined supersingular only for smooth projective varieties. For historical reasons we used crystalline cohomology in this definition. However, for a smooth projective variety  there is a comparison theorem between crystalline cohomology tensored with  $L$ and the rigid cohomology. In particular, it suffices to consider the Newton polygon of  the rigid cohomology groups.

A non-trivial elliptic surface over $\Ps^1$ satisfies $h^0=h^4=1$ and $h^1=h^3=0$. In order to prove that such a surface is supersingular it suffices to determine the slopes of Frobenius  on $H^2$.

\begin{proposition}
If $p\equiv 5 \bmod 6$ and $X_g$ is rational then $X_f$ is Artin supersingular.
\end{proposition}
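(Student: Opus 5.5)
Since $\tilde{X}_f$ is an elliptic surface over $\Ps^1$, the remarks preceding the statement reduce the task to showing that all slopes of Frobenius on $H^2(\tilde{X}_f)$ equal $1$.

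\textbf{Step 1: reduce to the tensor piece.} By the proposition comparing $H^2(\tilde{X}_f)$ with $(H^1(\tilde{C}_f)\otimes H^1(E))^G$, the two differ by a pure slope one-structure. Hence it suffices to show that
\[ W_f:=\left(H^1(\tilde{C}_f)^\zeta\otimes H^1(E)^{\zeta^5}\right)\oplus \left(H^1(\tilde{C}_f)^{\zeta^5} \otimes H^1(E)^\zeta\right)\]
has only slope $1$. We must take care that the eigenspace decomposition is compatible with Frobenius. The problem is that $\zeta\notin\F_p$ when $p\equiv 5\bmod 6$. So Frobenius (being $\sigma$-semilinear, or, over a finite field of definition $\F_q$, a linear map commuting with the automorphisms defined over $\F_q$) may interchange the $\zeta$- and $\zeta^5$-eigenspaces rather than preserve them. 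I would handle this by working with a power $F^r$ of Frobenius that is linear over $L$ and commutes with $\tilde{\tau}_f^*$ and $\sigma^*$. One can take everything defined over a finite field containing $\zeta$, say $\F_{p^{2a}}$, and compute slopes of the linear $p^{2a}$-Frobenius. Slopes are unchanged up to the obvious normalization, and this power preserves each eigenspace. Thus each summand of $W_f$ is a sub-$F^r$-module, and it is enough to treat them separately.

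\textbf{Step 2: use $E$ supersingular.} Since $p\equiv 2\bmod 3$, the curve $E:y^2=x^3+1$ is supersingular, so $H^1(E)^{\zeta}$ and $H^1(E)^{\zeta^5}$ are one-dimensional of slope $1/2$ each (normalized for $F^r$). Slopes of a tensor product are the sums of slopes. Consequently $H^1(\tilde{C}_f)^\zeta\otimes H^1(E)^{\zeta^5}$ has slopes $\lambda+1/2$, where $\lambda$ runs over the slopes of $H^1(\tilde{C}_f)^\zeta$, and similarly for the other summand. It therefore suffices to show that $H^1(\tilde{C}_f)^{\zeta}$ and $H^1(\tilde{C}_f)^{\zeta^5}$ have only slope $1/2$.

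\textbf{Step 3: transfer from the rational partner.} Now use that $X_g$ is rational. Then $H^2(\tilde X_g)$ is spanned by divisors and has pure slope $1$, so by the same proposition applied to $g$, the structure $W_g$ has pure slope $1$. By the lemma on invariants, $W_g\cong (H^1(\tilde{C}_f)^{\zeta^5}\otimes H^1(E)^{\zeta^5})\oplus(H^1(\tilde{C}_f)^{\zeta}\otimes H^1(E)^{\zeta})$. This isomorphism comes from the birational map $\psi$ of Lemma~\ref{lembir}, which is defined over the field of definition, so it respects $F^r$. Each summand is $F^r$-stable, so applying the tensor slope rule again gives that every slope of $H^1(\tilde{C}_f)^{\zeta^{\pm1}}$ equals $1-1/2=1/2$. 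Plugging this into Step 2 yields slope $1$ for all of $W_f$, hence for $H^2(\tilde{X}_f)$.

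\textbf{Main obstacle.} I expect the main obstacle to be the semilinearity issue of Step 1: making precise that a suitable linear power of Frobenius commutes with $\tilde\tau_f^*$ and $\sigma^*$ and that Newton slopes may be read off from it. This requires defining $f$, $h$ and the automorphisms over a common finite field containing $\zeta$, or, for general algebraically closed $K$, an isotriviality or specialization argument. Alternatively, one can avoid choosing a model by grouping the $\zeta$ and $\zeta^5$ eigenspaces together as the Frobenius-stable pieces and arguing directly with the sum. The hypothesis $p\equiv 5\bmod 6$ is what forces $E$ to be supersingular.
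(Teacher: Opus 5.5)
Your proof is correct and follows the paper's argument step for step. The chain is: $H^1(E)$ has slope $\tfrac12$; rationality of $X_g$ gives $W_g$ pure slope $1$; transport through the birational map of Lemma~\ref{lembir} gives slope $\tfrac12$ on $H^1(\tilde{C}_f)^{\zeta^{\pm1}}$; this yields slope $1$ on $W_f$, and hence on $H^2(\tilde{X}_f)$.

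Your concern that Frobenius swaps the $\zeta$- and $\zeta^5$-eigenspaces is a real point that the paper passes over silently. It is not actually an obstacle, though, and needs no finite-field model: since $p^2\equiv 1 \bmod 6$, the operator $F^2$ already preserves each eigenspace. Here $F^2$ is semilinear for the square of the Witt-vector Frobenius, and its slopes are exactly twice those of $F$.
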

\begin{proof}
Since $p\equiv 5 \bmod 6$ we have that all slopes of Frobenius on $H^1(E)$ are $\frac{1}{2}$.
As $X_g$ is rational we have that all slopes on $H^2(X_g)$ are $1$, thus the same holds for 
\[ \left(H^1(\tilde{C}_g)\otimes H^1(E)\right)^G\cong\left( H^1(\tilde{C}_f)^\zeta\otimes  H^1(E)^\zeta\right) \oplus\left(H^1(\tilde{C}_f)^{\zeta^5}\otimes H^i(E)^{\zeta^5}\right).\]
Therefore  all slopes of Frobenius on $H^1(\tilde{C}_g)^{\zeta}\cong H^1(\tilde{C}_f)^{\zeta^5}$ and $H^1(\tilde{C}_g)^{\zeta}\cong H^1(\tilde{C}_g)^\zeta$ are $\frac{1}{2}$. Hence Frobenius has only slopes $1$ on 
\[ (H^1(\tilde{C}_f)\otimes H^1(E))^G\cong H^1(\tilde{C}_f)^\zeta\otimes  H^1(E)^{\zeta^5} \oplus H^1(\tilde{C}_f)^{\zeta^5}\otimes H^i(E)^\zeta.\]
 Since this latter cohomology group differs from  $H^2(\tilde{X}_f)$ only by substructure of only slope one, we find that $X_f$ is Artin supersingular.
\end{proof}

\section{Two observations}\label{secRMK}

We keep the notation of the previous two sections. In particular, we have $n=\deg(f)/6$ and that $k$ equals the number of zeroes of $f$. Moreover we will assume that $X_g$ is rational, which is equivalent to assuming $k=n+1$.

Suppose $K=\C$ for the moment.
For $a\in \{2,3,6\}$ let $C_f^{(a)}$ be the curve $u^a=f(s,t)$ in $\Ps(1,1,(6/a)n)$.
Let $D_n^{(a)}\subset  \Ps(1,1,(6/a)n)$ be the curve $u^a=s^{6n}+t^{6n}$.

\begin{lemma} We have $\chi(D_n^{(a)})=2a-(1-a)6n$.\end{lemma}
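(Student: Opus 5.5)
The plan is to compute $\chi(D_n^{(a)})$ directly from the projection to the $(s:t)$-line, using additivity of the topological Euler characteristic over a stratification. This avoids any smoothness or genus computation.

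First I check that the projection is a morphism. The only point of $\Ps(1,1,(6/a)n)$ with $s=t=0$ is $(0:0:1)$. On $D_n^{(a)}$, $s=t=0$ forces $u^a=0$, so $D_n^{(a)}$ avoids this point. Hence $(s:t:u)\mapsto(s:t)$ is a well-defined morphism $\phi:D_n^{(a)}\to\Ps^1$. Since $D_n^{(a)}$ misses the singular point of the weighted projective plane, it is also quasi-smooth there, although smoothness is not needed for what follows.

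Next I describe the fibres of $\phi$, in the same way as the fibre computation in Lemma~\ref{lemquot}. Fix $(s_0:t_0)$ and normalise the representative. The points of $D_n^{(a)}$ above it are $(s_0:t_0:u)$ with $u^a=s_0^{6n}+t_0^{6n}$. The weighted scaling $\lambda\cdot(s,t,u)=(\lambda s,\lambda t,\lambda^{6n/a}u)$ fixing $(s_0,t_0)$ has $\lambda=1$, so distinct solutions $u$ give distinct points. Let $B=V(s^{6n}+t^{6n})\subset\Ps^1$.
\begin{itemize}
\item If $(s_0:t_0)\in B$, the fibre is the single point $u=0$. Since $\cha K\nmid 6n$, the set $B$ consists of exactly $6n$ distinct points.
\item If $(s_0:t_0)\notin B$, the fibre consists of exactly $a$ points, the $a$ distinct $a$-th roots.
\end{itemize}
So $\phi$ restricts to an unramified (topological) covering of degree $a$ over $\Ps^1\setminus B$, and is a bijection over $B$.

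Then I apply additivity:
\[\chi(D_n^{(a)})=a\,\chi(\Ps^1\setminus B)+\#B=a(2-6n)+6n=2a-(a-1)6n.\]
Equivalently, this is Riemann--Hurwitz for a cyclic $a$-fold cover totally ramified at $6n$ points. The case $a=2$ checks out: a hyperelliptic curve with $6n$ branch points has genus $3n-1$ and $\chi=4-6n$.

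The main obstacle is the sign in the stated formula. This computation gives $2a+(1-a)6n$. The displayed expression $2a-(1-a)6n$ agrees with it only if the sign in front of $(1-a)$ is a typo, or if $(a-1)$ was intended. I would record the result as $2a-(a-1)6n$ and use it in that form later. The only other points to check are that $D_n^{(a)}$ avoids $(0:0:1)$, so the projection is regular, and that the points of $B$ are distinct, which uses $\cha K\nmid 6n$.
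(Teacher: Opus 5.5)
Your argument is correct and is essentially the paper's proof: the paper simply applies Riemann--Hurwitz to the $a$-fold cover $D_n^{(a)}\to\Ps^1$, and your stratification count is the topological content of that formula (here $K=\C$, so your hypothesis on the characteristic is automatic). You are also right about the sign. The correct value is $2a+(1-a)6n$, and this is exactly the form the paper uses in the next lemma, $\chi(C_f^{(a)})=(1-a)6n+2a+(a-1)(6n-k)$, so the displayed $2a-(1-a)6n$ is a typo.
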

\begin{proof} Apply Riemann--Hurwitz formula to the $a$-fold cover $D_n^{(a)}\to\Ps^1$.
\end{proof}

\begin{lemma} We have $\chi(C_f^{(a)})=(1-a)6n+2a+(a-1)(6n-k)=2a-k(a-1)$. 
\end{lemma}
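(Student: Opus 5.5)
We compute $\chi(C_f^{(a)})$ via the Riemann--Hurwitz formula applied to the $a$-fold cyclic cover $\tilde{C}_f^{(a)}\to\Ps^1$ given by $(s:t:u)\mapsto(s:t)$. We read $\chi(C_f^{(a)})$ as the topological Euler characteristic of the normalization; for the singular model the count below has to be adjusted, and we will check which convention the paper needs. The cleanest route is to compare with the previous lemma, which covers the case where all $6n$ branch points are simple. That lemma gives $\chi(D_n^{(a)})=2a-(a-1)6n$; the sign in the displayed statement should be read this way, in agreement with the first summand $(1-a)6n+2a$.

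The key local computation is the ramification over a zero $(s_0:t_0)$ of $f$ of multiplicity $e$ with $1\le e\le 5$. Locally the cover is $u^a=w^e\cdot(\text{unit})$, where $w$ is a local coordinate on $\Ps^1$. On the normalization, the fiber over $(s_0:t_0)$ consists of $\gcd(a,e)$ points, each with ramification index $a/\gcd(a,e)$. Away from the zeroes of $f$ the cover is étale. There is also no ramification at infinity in the $u$-direction: since $\deg f=6n$ is divisible by $a$, the cover is unramified over every point with $f\neq0$.

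Riemann--Hurwitz in its topological form then reads
\[
\chi(\tilde{C}_f^{(a)})=a\cdot\chi(\Ps^1\setminus\{k\text{ points}\})+\sum_{j=1}^k\gcd(a,e_j)=2a-ka+\sum_{j=1}^k\gcd(a,e_j).
\]
To reach the stated closed form $2a-k(a-1)$ we need $\sum_j\gcd(a,e_j)=k$, that is, every multiplicity $e_j$ must be coprime to $a$. This is the main obstacle. For $a=6$ and a factorization type such as $(4,4,4)$ the naive count fails, so the statement implicitly uses the equality $\chi=2a-k(a-1)$ only when the fibers over the zeroes are single points. Alternatively the paper's $\chi$ is that of the singular curve $C_f^{(a)}\subset\Ps(1,1,(6/a)n)$ itself, where $u=0$ forces exactly one point over each zero of $f$.

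We therefore plan to work with the singular curve $C_f^{(a)}$. Over each of the $k$ distinct zeroes there lies exactly one point, since $u=0$ there. Over the complement the map is an unramified $a$-fold cover. Hence
\[
\chi(C_f^{(a)})=a(2-k)+k=2a-k(a-1).
\]
Writing this as $\chi(D_n^{(a)})+(a-1)(6n-k)$ shows that it matches the intermediate expression. Concretely, collapsing $6n$ simple branch points into $k$ branch points reduces the number of points over the branch locus from $6n$ to $k$ in the étale-complement count, and this changes $\chi$ by $(a-1)(6n-k)$. The only delicate point is to state clearly that $\chi$ refers to the curve $C_f^{(a)}$ (or to a cover where the fiber over each zero is a single point), and then the computation is immediate from additivity of $\chi$ over the stratification into the branch fibers and the étale part.
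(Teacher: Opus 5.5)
Your final argument is correct, but it follows a different route from the paper. Both you and the paper work with the singular curve $C_f^{(a)}\subset\Ps(1,1,(6/a)n)$.

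The paper invokes Dimca's comparison formula $\chi(C_f^{(a)})=\chi(D_n^{(a)})+\mu_f^{(a)}$, which compares the curve with the smooth member $D_n^{(a)}$ of the same degree. It then computes the total Milnor number from the local models $z^a=w^b$. Their Milnor algebras $\C[z,w]/(z^{a-1},w^{b-1})$ have dimension $(a-1)(b-1)$, so $\mu_f^{(a)}=(a-1)\sum(b-1)=(a-1)(6n-k)$.

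You instead use additivity of $\chi$ over a stratification: the $k$ one-point fibres over the zeroes of $f$, plus an unramified $a$-sheeted cover of $\Ps^1$ minus $k$ points. This gives $a(2-k)+k=2a-k(a-1)$ at once. Only afterwards do you observe that it agrees with $\chi(D_n^{(a)})+(a-1)(6n-k)$. You are also right that the preceding lemma should read $\chi(D_n^{(a)})=2a-(a-1)6n$.

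What your route buys:
\begin{itemize}
\item It is more elementary and self-contained, needing neither Dimca's comparison theorem nor Milnor numbers.
\item Your $\gcd(a,e_j)$ count shows that the closed formula holds for the singular model and can fail for the normalization when some multiplicity shares a factor with $a$.
\end{itemize}
You were right to settle on the singular curve, since that is how $C_f^{(a)}$ is defined. The detour through $\tilde{C}_f^{(a)}$ in your first paragraph can simply be dropped.

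What the paper's route buys: it sets up the local Milnor algebras and their weighted grading. The next proposition reuses these to control the eigenvalues of $\tau_f$ on the local cohomology at the singular points, and hence to conclude $H^1(\tilde{C}_f)^\zeta=H^1(C_f)^\zeta$.
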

\begin{proof}
From \cite[Corollary 5.4.4]{Dim} it follows that $\chi(C_f^{(a)})=\chi(D_n^{(a)})+\mu_f^{(a)}$, where $\mu_f^{(a)}$ is the total Milnor number of $C_f^{(a)}$.

Let $R\in C_f^{(a)}$ be a singular point. Then locally $C_f^{(a)}$ has a local equation $z^a=w^b$ in a neighbourhood of $R$, where $b=b(p)$ satisfies $2\leq b \leq 5$. The Milnor algebra of such singularity equals $\C[z,w]/(z^{a-1},w^{b-1})$ which is a $\C$-vector space of dimension $(a-1)(b-1)$. 

Now the $\sum_{p\in C_f^{\sing}} b(p)-1=\deg(f)-\deg(h)=6n-k$. Therefore $\mu_f^{(a)}=(a-1)(6n-k)$.
\end{proof}

\begin{proposition} We have  $\dim H^1({C_f})^\zeta=k-2$.
\end{proposition}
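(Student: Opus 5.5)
The plan is to decompose $H^1(C_f)$ (with $K=\C$, as in the lemmas just proved) into eigenspaces for $\tau_f^*$, and to read off the dimensions from the Euler characteristics $\chi(C_f^{(a)})$ computed in the previous lemma. Write $e_j=\dim H^1(C_f)^{\zeta^j}$ for $j=0,\dots,5$. Since $\tau_f$ has order $6$ and $H^1(C_f,\C)$ is a finite-dimensional representation of $\langle\tau_f\rangle$, we have $H^1(C_f)=\bigoplus_j H^1(C_f)^{\zeta^j}$.

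\textbf{Step 1: quotients.} For $a\in\{1,2,3,6\}$ the map $(s:t:u)\mapsto(s:t:u^{6/a})$ identifies $C_f^{(a)}$ with $C_f/\langle\tau_f^a\rangle$, with $C_f^{(1)}=\Ps^1$. For a finite group quotient we have $H^\ast(C_f/H,\Q)=H^\ast(C_f,\Q)^H$. The subgroup $\langle \tau_f^a\rangle$ fixes exactly the eigenspaces $\zeta^j$ with $6\mid ja$. I will assume that $H^0(C_f)$ and $H^2(C_f)$ are one-dimensional with trivial action; this holds when $C_f$ is irreducible, and $\tau_f$ preserves orientation. Then $b_1(C_f^{(a)})=2-\chi(C_f^{(a)})$. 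The previous lemma gives $\chi(C_f^{(a)})=2a-k(a-1)$, and we obtain
\[ e_0=0,\qquad e_0+e_3=k-2,\qquad e_0+e_2+e_4=2k-4,\qquad \sum_{j=0}^5 e_j=5k-10. \]

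\textbf{Step 2: conjugation symmetry.} $\tau_f$ is defined over $\Q$ and $H^1(C_f,\C)=H^1(C_f,\Q)\otimes\C$, so complex conjugation maps the $\zeta^j$-eigenspace onto the $\zeta^{-j}$-eigenspace. Hence $e_1=e_5$ and $e_2=e_4$. From Step 1, $e_3=k-2$ and $e_2+e_4=2k-4$, so
\[ e_1+e_5=(5k-10)-(k-2)-(2k-4)=2k-4, \]
and therefore $e_1=k-2$, which is the claim. Note that this argument does not use the hypothesis $k=n+1$.

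\textbf{Main obstacle.} The delicate point is the bookkeeping of $H^0$ and $H^2$. This matters in Step 1, and also in the previous lemma, since the formula taken from \cite{Dim} implicitly treats $C_f$ as irreducible. When all exponents of $f$ share a common factor with $6$, for instance $f=h^4$ in the $(4,4,4)$ case, the curve $u^6=f$ becomes reducible. Then $H^0$ and $H^2$ acquire nontrivial eigenspaces, permuted by $\tau_f$.

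To handle this I would first check directly that, in the factorization types under consideration, $C_f$ and its quotients are irreducible. Where this fails, I would redo the count of $b_1$ as $b_0+b_2-\chi$ with the correct number of components on each side. I expect the nontrivial characters appearing in $H^0$ and $H^2$ to cancel, so that the formula for $e_1$ is unchanged. An alternative route avoids the singular curve altogether: compute $e_1$ on the normalization $\tilde C_f$ via Chevalley--Weil / Riemann--Hurwitz for the cyclic cover $\tilde C_f\to\Ps^1$. In that computation a branch point of exponent $b_i$ contributes to the $\zeta$-character according to whether $6\nmid b_i$. The singular-curve computation is then recovered by comparing $H^1(C_f)$ with $H^1(\tilde C_f)$ through the weight filtration.
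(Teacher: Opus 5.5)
Your strategy is the paper's: identify $C_f^{(a)}$ with $C_f/\langle\tau_f^a\rangle$, use $\chi(C_f^{(a)})=2a-k(a-1)$ with inclusion--exclusion over $a\in\{1,2,3,6\}$ to isolate the $\zeta\oplus\zeta^5$ part, and then split it evenly by complex conjugation (the paper uses this last step tacitly). However, Step 1 as written is false in some cases the statement covers. The irreducibility you propose to check fails for the types $(4,2)$, $(3,3)$ and $(4,4,4)$. For example, for $f=l_1^4l_2^2$ the curve $C_f^{(2)}$ is the union of the two rational curves $u=\pm l_1^2l_2$ meeting in two points. So $b_1(C_f^{(2)})=1$, i.e.\ $e_3=1$, whereas your equation gives $e_0+e_3=k-2=0$. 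Similarly $b_1(C_f)=1\neq 5k-10$. Your final value $e_1=k-2$ survives only through the cancellation you ``expect'' but do not prove.

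The missing observation is short. First, $C_f$ is connected: every component maps onto $\Ps^1$, and over each zero of $f$ there is exactly one point of $C_f$ (namely $u=0$). Second, if $d=\gcd(6,e_1,\dots,e_k)$ and $f=F^d$, then $C_f$ has the $d$ components $u^{6/d}=\eta F$ with $\eta^d=1$, and $\tau_f$ permutes them cyclically. Since every $e_i\le 5$, we have $d\le 3$. Hence the eigenvalues of $\tau_f^*$ on $H^0(C_f)\oplus H^2(C_f)$ are $d$-th roots of unity and never $\zeta^{\pm1}$.

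So run the inclusion--exclusion on Euler characteristics of isotypic parts rather than on $b_1$:
$-(e_1+e_5)=\chi(C_f)-\chi(C_f^{(3)})-\chi(C_f^{(2)})+\chi(\Ps^1)=4-2k$.
This is exactly the paper's computation; its formulation needs only the vanishing just described, which it leaves implicit. Then $e_1=e_5$ finishes the proof. Your worry about the previous lemma is unfounded: $\chi(C_f^{(a)})=a(2-k)+k$ holds whether or not $C_f^{(a)}$ is reducible, because it is an $a$-sheeted cover of $\Ps^1$ with a single point over each of the $k$ zeros of $f$.
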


\begin{proof}
From the previous lemmata it follows that
\[ \sum_{i=0}^2(-1)^i \dim (H^i(C_f^{6})^\zeta \oplus H^i(C_f^{6})^{\zeta^5})\]
equals
\[\chi(C_f^{(6)})-( (\chi(C_f^{(2)})-\chi(\Ps^1))+(\chi(C_f^{(3)})-\chi(\Ps^1))+\chi(\Ps^1)).\]
which in turn equals $\chi(C_f^{(6)})-\chi(C_f^{(2)})-\chi(C_f^{(3)})+2$.

From the previous lemma it follows now that 
\[ \chi(C_f^{(6)})-\chi(C_f^{(2)})-\chi(C_f^{(3)})+2=2(6-2-3+1)-k(5-1-2)=4-2k  \]
and therefore $H^1(C_f)^{\zeta}$ has dimension $k-2$.
\end{proof}

We now drop the assumption $K=\C$,
\begin{proposition} We have that $\dim H^1(\tilde{C}_f)^\zeta=k-2$.
\end{proposition}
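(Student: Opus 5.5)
The plan is to redo the Euler characteristic computation of the previous proposition directly on the smooth curve $\tilde{C}_f$, replacing the Milnor number argument (which is specific to $K=\C$) by the Riemann--Hurwitz formula. Because $p>3$, all the covers involved have degree dividing $6$, which is prime to $p$, so they are tamely ramified and Riemann--Hurwitz holds in its classical form.

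\emph{Step 1: the quotient curves.} For $a\in\{2,3,6\}$ let $\tilde{C}_f^{(a)}$ be the normalization of $C_f^{(a)}: u^a=f(s,t)$. Then $\tilde{C}_f/\langle\tilde{\tau_f}^2\rangle\cong \tilde{C}_f^{(2)}$, via the invariant $u^3$, and $\tilde{C}_f/\langle\tilde{\tau_f}^3\rangle\cong\tilde{C}_f^{(3)}$, via the invariant $u^2$. For a finite group acting on a smooth projective curve, the cohomology of the quotient is the invariant part; this holds for rigid (equivalently crystalline $\otimes L$) cohomology, since the coefficients have characteristic zero. Decompose $H^1(\tilde{C}_f)$ into eigenspaces of $\tilde{\tau_f}^*$:
\begin{itemize}
\item the eigenvalues $\zeta^2,\zeta^4$ and $1$ make up $H^1(\tilde{C}_f^{(3)})$;
\item the eigenvalues $\zeta^3$ and $1$ make up $H^1(\tilde{C}_f^{(2)})$;
\item the eigenvalue $1$ gives $H^1(\Ps^1)=0$.
\end{itemize}
Since $H^0$ and $H^2$ carry only the eigenvalue $1$, we get
\[ -\dim\bigl(H^1(\tilde{C}_f)^\zeta\oplus H^1(\tilde{C}_f)^{\zeta^5}\bigr)=\chi(\tilde{C}_f)-\chi(\tilde{C}_f^{(2)})-\chi(\tilde{C}_f^{(3)})+\chi(\Ps^1). \]

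\emph{Step 2: Riemann--Hurwitz.} The map $\tilde{C}_f^{(a)}\to\Ps^1$ is cyclic of degree $a$. It is unramified away from the zeroes of $f$, since $a\mid 6n=\deg f$. Over a zero of multiplicity $b\in\{1,\dots,5\}$, the local equation $u^a=w^b$ shows that the fibre consists of $\gcd(a,b)$ points. Hence
\[ \chi(\tilde{C}_f^{(a)})=2a-\sum_{\text{zeroes}}\bigl(a-\gcd(a,b)\bigr). \]
In the alternating combination above, a zero of multiplicity $b$ contributes
\[ -\bigl[(6-\gcd(6,b))-(3-\gcd(3,b))-(2-\gcd(2,b))\bigr]. \]
Checking $b=1,\dots,5$ case by case, the bracket always equals $2$. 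The constant terms contribute $2(6-3-2+1)=4$. The combination is therefore $4-2k$, so $\dim H^1(\tilde{C}_f)^\zeta+\dim H^1(\tilde{C}_f)^{\zeta^5}=2k-4$.

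\emph{Step 3: splitting the two eigenspaces.} The step needing the most care is showing $\dim H^1(\tilde{C}_f)^\zeta=\dim H^1(\tilde{C}_f)^{\zeta^5}$ without complex conjugation. I would use Poincar\'e duality. The cup product $H^1\times H^1\to H^2$ is perfect and $\tilde{\tau_f}$-equivariant, and $\tilde{\tau_f}^*$ acts trivially on $H^2$. Hence the $\eta$-eigenspace pairs to zero with every $\eta'$-eigenspace unless $\eta\eta'=1$. It follows that the $\zeta$- and $\zeta^{5}$-eigenspaces are dual to each other, so they have equal dimension. This gives $\dim H^1(\tilde{C}_f)^\zeta=k-2$.

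The remaining points to verify carefully are that the quotient curves are exactly the normalizations $\tilde{C}_f^{(a)}$, and that the eigenspace bookkeeping in Step 1 is correct. The latter is routine once one notes that $\zeta^2,\zeta^4$ are exactly the nontrivial eigenvalues killed by $\tau^3$, and $\zeta^3$ is the nontrivial eigenvalue killed by $\tau^2$.
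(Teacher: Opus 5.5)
Your proof is correct, but it takes a different route from the paper's.

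\textbf{The paper's route.} It works with the \emph{singular} models $C_f^{(a)}$. Over $\C$ it computes $\chi(C_f^{(a)})=2a-k(a-1)$ from the Milnor numbers $(a-1)(b-1)$ of the singularities $z^a=w^b$. From this it gets $\dim H^1(C_f)^\zeta=k-2$ for the singular curve, separating $\zeta$ from $\zeta^5$ tacitly by complex conjugation. It then passes to $\tilde{C}_f$ by a local argument at the singular points: only $-1$ and cube roots of unity occur in the difference between $H^1(C_f)$ and $H^1(\tilde{C}_f)$. Finally it treats characteristic $p$ by lifting $\tilde{C}_f$ to characteristic zero.

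\textbf{Your route.} You work on the smooth normalizations from the start. You use tame Riemann--Hurwitz (this is where $p>3$ enters), the identification of the cohomology of a quotient with the invariants, and Poincar\'e duality instead of complex conjugation. This is uniform in the characteristic. It avoids both the singular-to-normal comparison and the lifting step, which tacitly needs a comparison theorem compatible with $\tilde{\tau_f}$. The paper's route additionally gives the statement for the singular curve $C_f$. The two computations match: at a zero of multiplicity $b$ they differ by $(\gcd(2,b)-1)(\gcd(3,b)-1)$, which vanishes since $6\nmid b$. This is the numerical counterpart of the paper's local step.

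\textbf{One small correction.} $\tilde{C}_f$ need not be connected. If $f$ is a square or a cube, e.g.\ $t^4s^2$, $t^3s^3$ or $t^4(t-s)^4s^4$ (all of which occur in the classification), then $C_f$ has $2$ or $3$ components permuted by $\tilde{\tau_f}$. In that case $H^0$ and $H^2$ do not carry only the eigenvalue $1$, and $\tilde{\tau_f}^*$ is not the identity on $H^2$. The argument survives with two adjustments:
\begin{enumerate}
\item On $H^0$ and $H^2$ the eigenvalues come from a permutation action of order $2$ or $3$, so $\zeta$ and $\zeta^5$ do not occur there.
\item In Step~3, pair via cup product followed by the $\tilde{\tau_f}$-invariant trace map $H^2\to L$, rather than relying on $\tilde{\tau_f}^*$ acting trivially on $H^2$.
\end{enumerate}
The paper makes the same tacit connectedness assumption.
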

\begin{proof}
If $K=\C$ then let $R\in C_f^{(a)}$ be a singular point. Then  $C_f^{(a)}$ has a local equation of the form $z^a=w^b$ in a neighbourhood of $R$, where $b=b(p)$ satisfies $2\leq b \leq 5$. The Milnor algebra of such  a singularity equals $\C[z,w]/(z^{a-1},w^{b-1})$, which is a $\C$-vector space of dimension $(a-1)(b-1)$. The dimension of the local cohomology $H^1_p(C_f^{(a)})$ equals the dimension of the space of elements in the Milnor algebra of weighted degree $1-\frac{1}{a}-\frac{1}{b}$, with $\deg(z)=\frac{1}{a}$ and $\deg{w}=\frac{1}{b}$. Since this dimension for $a=6$ equals the sum of the dimension for $a=2$ and $a=3$ we find that the only eigenvalues of $\tau_f$ on these groups are $-1$ and third roots of unity. In particular $H^1(\tilde{C}_f)^\zeta=H^1(C_f)^\zeta$.

If $K\neq \C$ then $K$ is a field of characteristic at least 5. Then we can lift $\tilde{C}_f$ to characteristic zero and obtain the result from the characteristic zero result. 
\end{proof}

\begin{remark}
If $k>3$ then the family $\tilde{C}_f$ yields a subvariety of dimension $k-3$ in the corresponding moduli space of curves of the correct genus. By the Torelli theorem $J(\tilde{C}_f)$ varies. However, the results of the previous section yield that $J(\tilde{C}_f)/(J(\tilde{C}_f^{(2)})+J(\tilde{C}_f^{(3)}))$ is constant.
\end{remark}

The second observation compares characteristic zero and characteristic $p$. Let $K$ be either $\C$ or an algebraically closed field of characteristic $p>0$.

\begin{lemma} If $K=\C$ and $X_g$ is rational then 
\[ \rank \MW(\varphi_f)=0 \mbox { and } \rank \MW(\varphi_g)=2(n-1).\]
If $K=\overline{\F_p}$ with $p\equiv 5 \bmod 6$ and $X_g$ is rational then 
\[ \rank \MW(\varphi_f)=\rank \MW(\varphi_g)=2(n-1).\]
\end{lemma}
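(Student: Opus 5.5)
We plan to use the Shioda--Tate formula together with the product-quotient description of $H^2$ from Section~\ref{secPQ}.

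\emph{Step 1: Shioda--Tate.} For an elliptic surface $\varphi:\tilde{X}\to\Ps^1$ with a section,
\[ \rank \MW(\varphi)=\rho(\tilde{X})-2-\sum_v (m_v-1), \]
where $m_v$ is the number of components of the fibre over $v$. For $y^2=x^3+F$, the fibre over a zero of $F$ of multiplicity $1,2,3,4,5$ has type $II,IV,I_0^*,IV^*,II^*$. These contribute $0,2,4,6,8$ respectively, that is, $2(e-1)$ for multiplicity $e$.

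\emph{Step 2: the surface $X_g$.} Since $X_g$ is rational, $\rho=b_2=10$. The multiplicity of $g$ at a zero of $f$ of order $e$ is $6-e$, and these multiplicities sum to $\deg g=6$. Hence the correction term is $\sum 2(5-e)=2(6-k)$. Using $k=n+1$ this gives
\[ \rank\MW(\varphi_g)=10-2-2(6-k)=2k-4=2(n-1). \]
This computation is valid in every characteristic $p>3$.

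\emph{Step 3: the surface $X_f$.} We compute $b_2(\tilde{X}_f)=12n-2$ from $c_2=12n$. The fibre correction is $\sum 2(e-1)=2(6n-k)$. So the rank equals $\rho-2-12n+2k=\rho-(12n-2)+2(k-2)$. By the Proposition in Section~\ref{secPQ}, $H^2(\tilde{X}_f)$ equals $(H^1(\tilde{C}_f)\otimes H^1(E))^G$ up to a pure slope one-structure. This tensor part has dimension $2\dim H^1(\tilde{C}_f)^\zeta=2(k-2)$ by the last proposition above. The pure slope one part is spanned by divisor classes: the zero section, a fibre, and the fibre components. Counting shows it has dimension exactly $12n-2-2(k-2)$.

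Over $\C$, the Proposition of Section~\ref{secPF} says the tensor part has trivial $(1,1)$-part. By Lefschetz $(1,1)$ it therefore contains no divisors, so $\rho=12n-2-2(k-2)$ and the rank is $0$.

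Over $\overline{\F_p}$ with $p\equiv 5\bmod 6$, the surface $X_f$ is supersingular by the preceding Proposition. We need $\rho=b_2$, i.e.\ Shioda supersingularity, and this is the main obstacle. Artin supersingularity gives $\rho=b_2$ only via the Tate conjecture. Here it is available because $\tilde{X}_f$ is dominated by $\tilde{C}_f\times E$, and the Tate conjecture holds for products of curves over finite fields (Tate for abelian varieties, i.e.\ homomorphisms $J(\tilde C_f)\to E$) and descends to quotients. Equivalently, Frobenius slopes $\tfrac12$ on $H^1(\tilde{C}_f)^{\zeta}$ together with a supersingular $E$ make the eigen-part of $J(\tilde C_f)$ isogenous to a power of $E$ over $\overline{\F_p}$. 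Hence $\Hom(J(\tilde C_f),E)$ fills out the whole tensor part, which is therefore algebraic. Then $\rho=12n-2$ and the rank is $2(k-2)=2(n-1)$, as claimed.
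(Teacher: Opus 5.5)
Your argument is correct, but for $\varphi_f$ it takes a different route from the paper. Your Step~2 is the same fibre count the paper quotes for $\varphi_g$.

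\emph{Over $\C$.} The paper argues directly with sections. A non-torsion section of $\varphi_f$ gives a morphism $\tilde C_f\to E$, and its graph is a nonzero $(1,1)$-class in $(H^1(\tilde C_f)\otimes H^1(E))^G$, contradicting the proposition of Section~\ref{secPF}. You instead use Shioda--Tate to reduce everything to the identity $\rank\MW(\varphi_f)=2(k-2)-(b_2-\rho)$, and then compute $\rho$. You can shortcut this case further, with no product-quotient input at all: $h^{1,1}(\tilde X_f)=b_2-2p_g=(12n-2)-2(n-1)=10n$. When $k=n+1$ this already equals the rank $2+2(6n-k)=10n$ of the trivial lattice. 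Hence $\rho$ equals that rank and the Mordell--Weil rank is $0$. This is the divisor count behind \cite{Ext}.

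\emph{In characteristic $p\equiv 5\bmod 6$.} The paper never computes $\rho$; it \emph{constructs} the sections. It pulls back sections of a rational surface $\varphi_{g'}$ along a Frobenius base change of $\Ps^1$. The congruence $p\equiv -1\bmod 6$ turns the multiplicities $6-e$ of $g'$ into $e$ modulo $6$, so the pulled-back surface is $\tilde X_f$.

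\emph{What each route buys.} The paper's characteristic-$p$ argument is elementary and works over any algebraically closed field of such characteristic. It uses no supersingularity input, and it yields $\rho=b_2$ (Shioda supersingularity) as a by-product, which strengthens the Artin supersingularity proposition.

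Your route ties the lemma to the product-quotient cohomology, but it pays for this in two ways:
\begin{enumerate}
\item You need Tate's theorem. That requires $\tilde X_f$ to be defined over a finite field, which $K=\overline{\F_p}$ guarantees, and the descent from $\tilde C_f\times E$ via $\pi_*\pi^*=6$.
\item You need the standard fact that, for divisors on a surface, the Tate conjecture forces Frobenius to be semisimple on the eigenvalue-$q$ part. This is what turns ``all eigenvalues are $q$ times roots of unity'' into $\rho=b_2$.
\end{enumerate}
Your second variant sidesteps both issues. There, the $\zeta,\zeta^5$-part $A$ of $J(\tilde C_f)$ is supersingular, hence isogenous to $E^{k-2}$. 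So $\Hom(A,E)$ has rank $4(k-2)$, which equals the dimension of $H^1(A)\otimes H^1(E)$, and every class there is algebraic.
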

\begin{proof}  Note that the singular fibers of $X_g$ correspond to the zeros of $g$, and that each singular fiber is additive. A rational elliptic surface with $k$ additive singular fibers has geometric Mordell-Weil rank $2(k-2)$. 
This yields the statements for $\varphi_g$.

A non-torsion section of $\varphi_f$ induces a morphism $\tilde{C}_g\to E$. The class of the graph of this morphism yields a nonzero element of $H^1(\tilde{C}_g)\otimes H^1(E)$ of Hodge type $(1,1)$. (One takes the class in $H^2(\tilde{C}_f\times E)$ and then apply the orthogonal projection on $((H^0(\tilde{C}_f)\otimes H^2(E))\oplus (H^2(\tilde{C}_f)\otimes H^0(E)))^\perp$.)

If $K=\C$ then there are no nonzero class in $H^1(\tilde{C}_g)\otimes H^1(E)$ of type $(1,1)$. Hence $\MW(\varphi_f)$ is finite.

However, if $K$ has characteristic  $p\equiv 5 \bmod 6$. Then the pull back of a section of $\varphi_g$ under a Frobenius base change yields a section over $\varphi_{f'}$, where $f'\in K[s,t]$ is a polynomial such that $f'$ and $g^p$ have the same zeroes in $K$, the difference of order of vanishing is a multiple of six, and $f'$ has only factors with multiplicity at most 5.

Hence $\rank \MW(\varphi_f')=\rank\MW(\varphi_g)$. Since $p\equiv -1 \bmod 6$ we obtain that the multiplicity of factors of $f$ and $f'$ agree. Since $K$ is perfect we can find  a $g'$ such that it pull back under Frobenius equals  $f$ modulo sixth powers.
\end{proof}
\section{Case $j=1728$}\label{secj1728}
Up to this point we considered only the case $j=0$. The case $j=1728$ can be obtained similarly and we will now discuss the changes.

For $j=1728$  we take $f\in K[s,t]_{4n}$ and $f$ should have factors of multiplicity at most 3. The polynomial $h$ is defined as before, but $g=h^4/f$. For the definition of $X_f$ and $X_g$ one takes $y^2=x^3+fx$ and $y^2=x^3+gx$.
The curves $C_f$ and $C_g$ are defined to be $u^4=f$ and $v^4=g$ respectively and for $E$ one takes the elliptic curve $y^2z=x^3-xz^2$. One considers now primes $p$ such that $p\equiv 3\bmod 4$.
 The root of unity $\zeta$ is replaced by $i$, with $i^2=-1$ and  everywhere we wrote $\zeta^5$ we replace this by $-i$. With these modifications all results stand, except for the classification of possible $f$, which is given in the introduction.

\bibliographystyle{plain}
\bibliography{remke2}

\end{document}